\documentclass[english, 11pt]{amsart}

\usepackage{latexsym}
\usepackage{color}

\usepackage{tikz-cd}
\usepackage[all]{xy}
\usepackage[utf8x]{inputenc}  
\usepackage[english]{babel}
\usepackage{mathtools}
\usepackage[T1]{fontenc}
\usepackage{enumitem}

\usepackage{graphicx,epsfig,amscd,graphics,subcaption}
\usepackage{amsmath,amssymb}

\newcommand{\bigslant}[2]{{\raisebox{.2em}{$#1$}\left/\raisebox{-.2em}{$#2$}\right.}}

\newcommand{\windtree}{\mathcal{W}}

\newcommand{\torusl}{\mathcal{T}_\Lambda}

\newcommand{\C}{\mathbb{C}}
\newcommand{\Z}{\mathbb{Z}}
\newcommand{\N}{\mathbb{N}}
\newcommand{\R}{\mathbb{R}}

\newcommand{\GL}{\mathrm{GL}}
\newcommand{\SL}{\mathrm{SL}}
\newcommand{\SO}{\mathrm{SO}}
\newcommand{\Gr}{\mathrm{Gr}}

\newcommand{\quadra}{\mathcal{Q}}

\newcommand{\isoC}{\operatorname{Iso}^+ (\C)}

\newcommand{\bilfam}{\mathcal {B}}

\newcommand{\re}{\mathrm{Re}}
\newcommand{\im}{\mathrm{Im}}

\newtheorem{theorem}{Theorem}
\newtheorem*{theorem*}{Theorem}
\newtheorem{corollary}{Corollary}
\newtheorem{lemma}{Lemma}
\newtheorem*{lemma*}{Lemma}
\newtheorem{proposition}{Proposition}

\theoremstyle{definition}
\newtheorem{definition}{Definition}

\begin{document}
\title{Diffusion rate of windtree models and Lyapunov exponents}
\author{Charles Fougeron}
\maketitle

\begin{abstract}
	Consider a windtree model with several parallel arbitrary right-angled obstacles
	placed periodically on the plane. We show that its diffusion
	rate is the largest Lyapunov exponent of some stratum of quadratic 
	differentials and exhibit a new general strategy to compute the
	generic diffusion rate of such models. This result enables us to compute
	numerically the diffusion rates of a large family of models and to
	observe its asymptotic behaviour according to the shape of the
	obstacles.
\end{abstract}


\section{Introduction.}

The windtree model was first introduced by Paul and
Tatiana Ehrenfest in 1912 \cite{Ehr} as part of statistical physics
investigations. In this book they set a simplified model for non interacting light
particles moving around massive particles that do not move but on which
the light particles bounce with elastic collision. We classically refer to the
light particles as the \textit{wind} and the static ones as \textit{trees}. The
motivation of the two physicists was to understand the kinetic behaviour of
such a system. They asked, among others, the following question: \textit{for a
	\text{generic} disposition of square trees orientated in the same
	direction, does the speed of $K$ light particles equidistributes
asymptotically in the $4$ possible directions ?} 

Plenty of questions have been studied on this model, in particular for the
$\Z^2$-periodic case with square obstacles. The results feature alternatively
elements of chaotic and periodic behaviour. In \cite{HardyWeber} was proven on
the one hand recurrence of the billiard flow and on the other hand abnormal
diffusion for special dimensions of the obstacles, \cite{Ulci} showed
genericity of non-ergodic behaviour, and its diffusion rate was computed to be
$2/3$ in \cite{DHL}. A positive answer to the original question has only been provided
very recently by \cite{Alba2}. 

In parallel a similar model with smooth convex obstacles has been studied by a
large amount of mathematicians throughout the twentieth century (see e.g.
\cite{SinaiBu} or \cite{SV}). In this case, the billiards satisfy some
hyperbolicity property and the behaviour of its flow is closely related to a
Brownian motion. 

A good tool to check if a polygonal windtree model has such an hyperbolic
behaviour is provided by the diffusion rates which should be $1/2$ in the case
of Brownian-like motions. In particular the result of \cite{DHL} breaks any
hope to apply directly methods of the smooth convex case to the rectangular
model. The question is still open in the case of asymptotic of polygonal shapes
approaching smooth convex ones, for example with the circle : is the diffusion
rate of periodic windtree models with regular $n$-gons going to $1/2$ when $n$
goes to $\infty$ ? We hope that developing methods to compute these diffusion
rates in more general settings provide a first step to understanding this
asymptotic and the non-convex obstacles cases.\\

The arguments of \cite{DHL} relies on a remarkable correspondence between the
diffusion rate of an infinite periodic billiard table and the Lyapunov exponent
of an associated translation surface.  This computation was generalised in
\cite{DZ} to any $\Z^2$-periodic windtree which trees have only right angles
and are horizontally and vertically symmetric.  In every of these cases, the
corresponding Lyapunov exponent belongs to some $2$ dimensional subbundle
of the Hodge bundle. Moreover in all of these cases the Lyapunov exponent is
rational and can be computed using some geometric arguments.\\

In this article we describe a general strategy to exhibit the Lyapunov
exponent of some locus in a stratum that corresponds to the diffusion rate of
a given periodic windtree model. It relies on two main ingredients : the first
one is to identify a common orbit closure of almost all translation
surfaces associated to a family of windtree tables; the second one is to find an
irreducible subbundle of the Hodge bundle on this locus which top Lyapunov
exponent is exactly the diffusion rate.  The tools for the first craft are
given by recent results of \cite{EMM}, \cite{Wright14} and \cite{Wright13} and
are introduced in subsection \ref{closure}. For the second one, we show an
additional lemma to the work of \cite{EC} which yields the diffusion rate for
any translation surface in a generic direction.\\

We apply this method to the case of periodic windtree with several obstacles in
its fundamental domain. Pick a family of $n \geq 2$ rectangular obstacles in a
square, and repeat this table $\Z^2$-periodically in the plane. We show the
following theorem,

\begin{theorem*}
	The diffusion rate for almost every such windtree model, in almost
	every direction is equal to the top Lyapunov exponent of $\quadra
	(1^{4n})$.
\begin{figure}[h!]
  \centering
  \includegraphics[width=7cm]{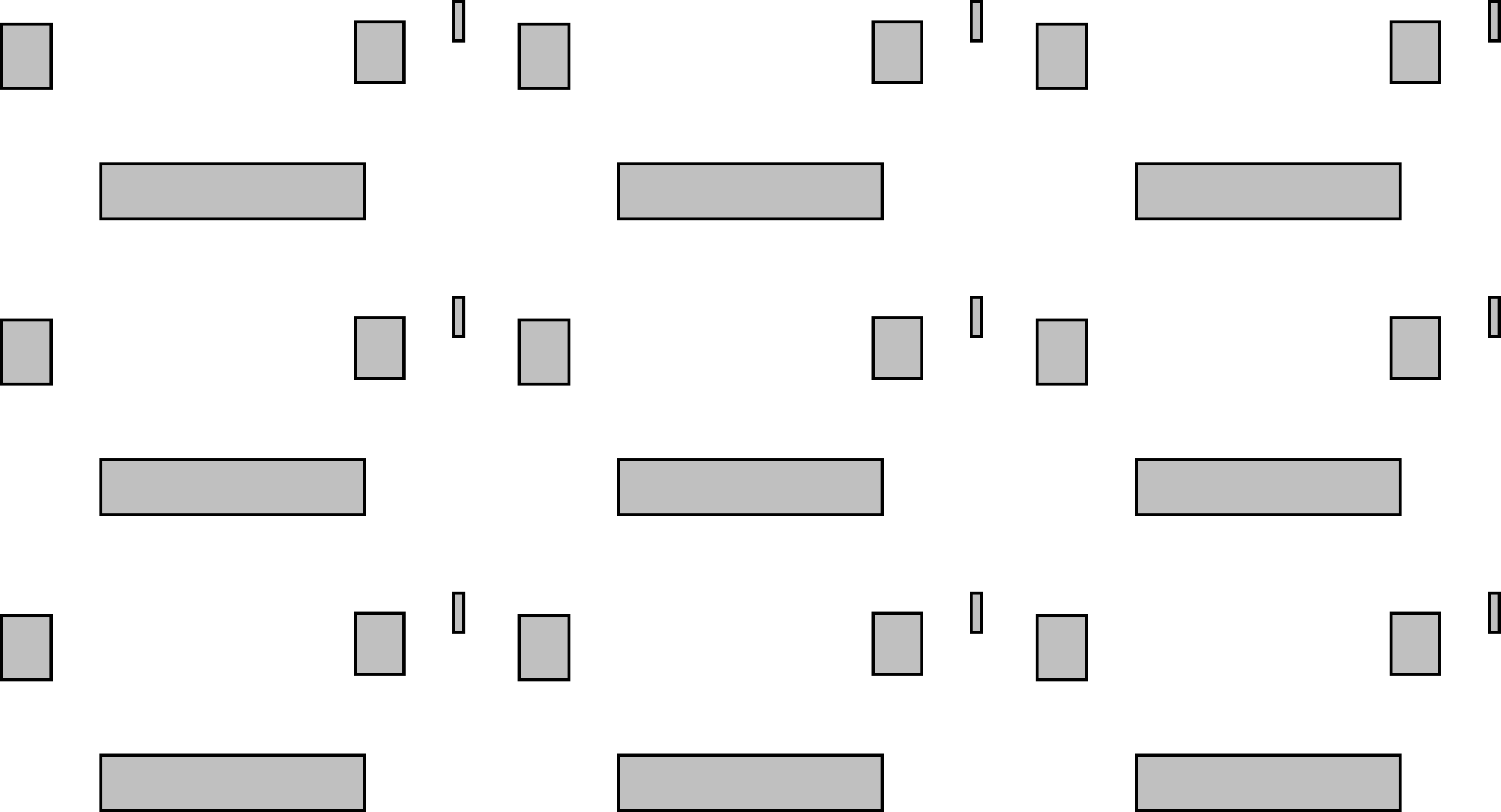}
\end{figure}
\end{theorem*}

Now take more general obstacles again with right angles, if $n$ is the number
of obstacles ans $p$ the total number of inward (concave) right angles in all the
obstacles, we have a similar result,
\begin{theorem*}
	The diffusion rate for almost every such windtree model, in almost
	every direction is equal to the top Lyapunov exponent of $\quadra
	(1^{4n + p}, -1^p)$.
	\begin{figure}[h] \centering
		\includegraphics[width=7cm]{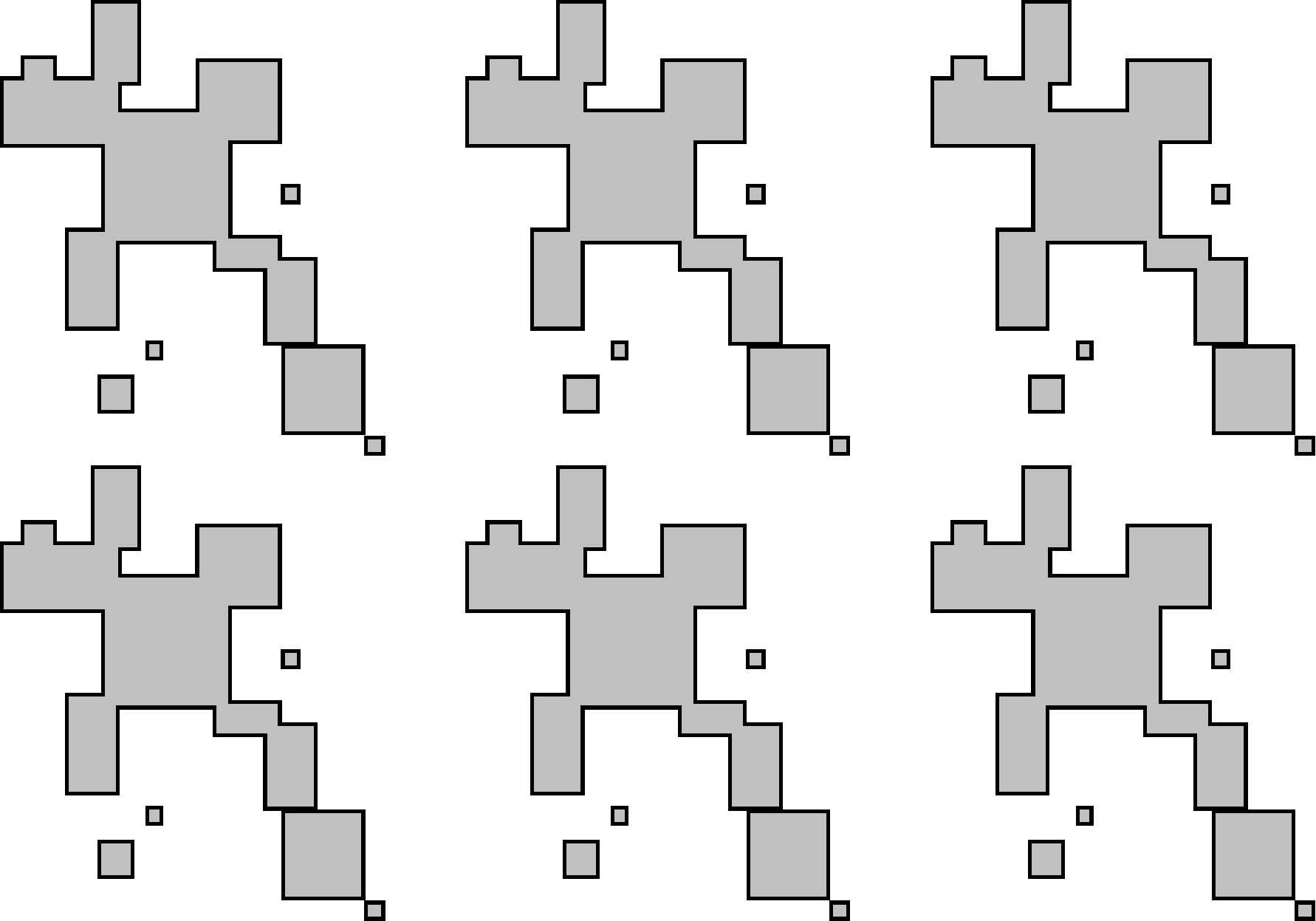}
	\end{figure}
\end{theorem*}

In the last section we discuss the value of these exponents running numerical
experiments with a Sage code developed by the author in a collaborative project
\cite{flatsurf}. These experiments give strong evidences that the family we
have introduced above can approach arbitrarily close any diffusion rate between
$1/2$ and $0$. In particular it goes to $1/2$ (\textit{i.e.} the diffusion rate
of the Brownian motion) when the number of obstacles goes to infinity.

\section{Translation surfaces}

\subsection{Definition}

A \textit{translation surface} is a surface whose change of charts are
translations.  Such a surface is endowed with a flat metric (the pull-back of
the canonical metric on $\R^2$) and a canonical direction.\\

One way to think of these translation surfaces is by gluing sides of a polygon
via translations.  Let $P$ be a polygon with $2k$ edges and let $z_1, \dots,
z_{2k}$ be complex numbers associated to the vectors of its sides.  Assume that
$z_i = z_{k+i}$, we glue the sides $z_i$ and $z_{k+i}$ and obtain a flat
surface with conical singularities of  angle multiples of $2\pi$. \\

We can define similar structures allowing the change of charts to be also
translations composed with $-\mathrm{Id}$. The class of surfaces we obtain are
called \textit{half-translation} surfaces.\\

Using triangulations Veech showed in \cite{Veech93} that this is a general
construction with a notion of \textit{pseudo-polygons} (in a much wider class
of structures). The complex numbers  $(z_i)_{1 \leq i \leq k}$ (defined up to a
sign in the case of half-translation surfaces) induce local coordinates in the
moduli space of such structures, we call them \textit{period coordinates}.  We
will introduce them as periods of abelian differentials below.

\subsubsection{Differentials and Moduli spaces}

There is a one-to-one correspondence between compact translation surfaces and
Riemann surfaces equipped with a non-zero holomorphic $1$-form. As well as
between compact half-translation surfaces and Riemann surfaces equipped with
quadratic differentials.\\

For $g \geq 1$ let $\alpha$ and $\beta$ be partitions of $2g-2$ and $4g-4$. The
strata $\mathcal H(\alpha)$ and $\quadra (\beta)$ are defined to be the sets of
$(S, \omega)$ and $(S, q)$ where $S$ is a genus $g$ closed Riemann surface,
$\omega$ is a holomorphic $1$-form on $S$, $q$ is a quadratic differential on
$S$, and their zeros multiplicities are given by $\alpha$ and $\beta$.  The
conical points in a translation surface correspond to the zeros of the
differential. If $d$ is the multiplicity of the zero, the angle is equal to
$2(d+1)\pi$ (and $(d+2)\pi$ for half-translation surfaces).\\

Given a translation surface $(S, \omega)$, let $\Sigma \subset S$ be the set of
zeros of $\omega$. Pick a basis $\{ \xi_1, \dots, \xi_n \}$ for the relative
homology group $H_1(S, \Sigma; \Z)$. The map $\Phi: \mathcal H(\alpha) \to
\C^n$ defined by
$$\Phi(S, \omega) = \left( \int_{\xi_1} \omega, \dots, \int_{\xi_n} \omega
\right)$$ redefines local period coordinates with translation as change of
charts as above.\\

There is a natural action of $\GL(2,\R)$ on connected components of strata
coming from linear action of $\GL(2,\R)$ on $\R^2$ in charts. For any
translation surface in a stratum, its orbit closure via this action is some
affine invariant manifold of the stratum : it is defined in local period
coordinates by linear equations. They are endowed with a canonical measure
supported on these surfaces called affine measures \cite{EM}, \cite{EMM}.

\subsubsection{Translation cover}

To any primitive half-translation surfaces $S$ we associate its
\textit{translation cover} $\hat {S}$ corresponding to the subgroup of the
fundamental group with holonomy equal to $-1$. It is a double cover. We endow
$\hat S$ with the pulled-back metric of $S$ which defines a translation surface
structure for $\hat S$.\\

\noindent From a differential geometric point of view, we constructed a double cover of
$S$ on which the quadratic differential $q$ can be written $\omega^2$ where
$\omega$ is a holomorphic $1$-form.\\

Let $(S,q)$ be a half-translation surface in $\quadra(m_1, \dots m_d)$, $\hat
S$ its translation cover and $\hat \Sigma$ the preimage of its singular points.
Following \cite{AEZ}, assume there is a basis $\{ a_1, b_1, \dots, a_g, b_g \}$
of $H_1(S; \Z)$ which has trivial linear holonomy (this exists as long as there
is a zero with odd multiplicity) and let $\gamma_1, \dots, \gamma_{d-2}$ be
primitive non crossing elements of $H_1(S, \Sigma; \Z)$ representing a path
from $P_i$ to $P_{i+1}$ where $\{P_1, \dots, P_d\} = \Sigma$.\\

Given a saddle connection or an absolute cycle with trivial linear holonomy
$\gamma$, let $\gamma', \gamma''$ be its $2$ lifts in $\hat S$ endowed with the
orientation inherited from $\gamma$. Then we introduce $$\hat{\gamma} :=
\gamma' - \gamma''.$$
By definition $\hat{\gamma}$ belongs to $H^{-}_1(\hat S, \hat \Sigma;
\C)$ the $-1$-eigenspace of the linear automorphism induced by the deck
involution of the double cover.

\begin{proposition}
	The family $\{ \hat a_1, \hat b_1, \dots, \hat a_g, \hat b_g, \hat
	\gamma_1, \dots, \hat \gamma_{d-2} \}$ is a basis of $H^-_1(\hat S,
	\hat \Sigma; \C)$.
	\label{hatbasis} 
\end{proposition}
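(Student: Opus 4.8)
The plan is to prove this by a dimension count combined with a linear independence argument, exploiting the fact that the deck involution $\tau$ of the double cover $\hat S \to S$ splits every relevant (co)homology group into $\pm 1$ eigenspaces. First I would recall that $H_1(\hat S, \hat\Sigma; \C)$ decomposes as $H_1^+(\hat S, \hat\Sigma; \C) \oplus H_1^-(\hat S, \hat\Sigma; \C)$ under $\tau$, and that the projection $\pi \colon \hat S \to S$ identifies the invariant part $H_1^+(\hat S, \hat\Sigma; \C)$ with $H_1(S, \Sigma; \C)$ (pulling back cycles, which are $\tau$-symmetric). So $\dim H_1^-(\hat S,\hat\Sigma;\C) = \dim H_1(\hat S, \hat\Sigma; \C) - \dim H_1(S, \Sigma; \C)$, and both terms on the right are computed by the standard formula $\dim H_1(X, Z; \C) = 2g(X) + \#Z - 1$ for a connected surface $X$ with a nonempty finite set $Z$. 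This reduces the proposition to: (a) compute $g(\hat S)$ and $\#\hat\Sigma$ via Riemann--Hurwitz and the ramification behaviour of $\pi$ over $\Sigma$; (b) check that the $2g + (d-2)$ proposed classes $\hat a_i, \hat b_i, \hat\gamma_j$ are linearly independent in $H_1^-$; then the count matches and independence forces them to be a basis.

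For step (a): the cover $\hat S \to S$ is branched exactly over the zeros $P_i$ of $q$ of odd multiplicity $m_i$ (the even-order zeros lift to two regular points, the odd-order ones to a single ramification point). Writing $k$ for the number of odd $m_i$, Riemann--Hurwitz gives $2 - 2g(\hat S) = 2(2 - 2g) - k$, and $\#\hat\Sigma = 2d - k$ (each of the $k$ odd zeros contributes one preimage, each of the remaining $d - k$ even zeros contributes two). Plugging in: $\dim H_1(\hat S, \hat\Sigma; \C) = (4g - 2 + k) + (2d - k) - 1 = 4g + 2d - 3$, while $\dim H_1(S, \Sigma; \C) = 2g + d - 1$, so $\dim H_1^-(\hat S, \hat\Sigma; \C) = 2g + d - 2$. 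That is exactly the cardinality of the proposed family, which handles the counting part cleanly; the bookkeeping of which zeros ramify is where I would be most careful, but the hypothesis in the excerpt (existence of a symplectic basis of $H_1(S;\Z)$ with trivial holonomy, equivalently existence of an odd zero) guarantees $k \geq 1$ and $g(\hat S) > g$, so nothing degenerates.

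For step (b), the linear independence, I expect the main obstacle. The operator $\gamma \mapsto \hat\gamma = \gamma' - \gamma''$ is, up to a factor of $2$, the composition of pullback $\pi^*$ with the projection onto the $-1$ eigenspace (for a $\tau$-symmetric cycle $\gamma$, $\pi^{-1}(\gamma) = \gamma' + \gamma''$ is the $+1$ part and $\gamma' - \gamma''$ is the $-1$ part of the "lift"). I would argue that this map is injective on the span of $\{a_i, b_i, \gamma_j\}$ inside $H_1(S, \Sigma; \C)$ — which is all of $H_1(S, \Sigma; \C)$ since that family is a basis — by pushing a hypothetical relation $\sum c_i \hat a_i + \sum d_i \hat b_i + \sum e_j \hat\gamma_j = 0$ forward via $\pi_*$: since $\pi_*(\gamma' - \gamma'') = \gamma - \gamma = 0$ this is vacuous, so instead I would pair against $\tau$-anti-invariant test classes, or better, observe that applying $\pi_* $ to $\gamma' + \gamma''$ recovers $2\gamma$, hence the "symmetrization" map $H_1(S,\Sigma;\C) \to H_1^+(\hat S,\hat\Sigma;\C)$ is an isomorphism, and the "anti-symmetrization" $\gamma \mapsto \hat\gamma$ is a fixed linear map whose kernel consists of classes $\gamma$ for which the two lifts agree. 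A class with $\gamma' = \gamma''$ in $H_1(\hat S, \hat\Sigma; \C)$ would mean $\gamma$ lifts to a cycle fixed by $\tau$, i.e. $\pi^*\gamma$ is already $\tau$-invariant, which for the relative homology happens only for $\gamma = 0$ once one accounts for the connectedness of $\hat S$ and the choice of $\gamma_j$ as non-crossing primitive arcs. Once injectivity of $\gamma \mapsto \hat\gamma$ is established, the image of the basis $\{a_i,b_i,\gamma_j\}$ is a linearly independent family of size $2g + d - 2$ in $H_1^-(\hat S, \hat\Sigma; \C)$, which by step (a) has exactly that dimension, so it is a basis. I would likely cite \cite{AEZ} for the precise form of this anti-symmetrization isomorphism rather than reproving it in full.
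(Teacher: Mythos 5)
The paper itself gives no proof of Proposition \ref{hatbasis}; it implicitly defers to \cite{AEZ}, so your attempt can only be judged on its own terms. Your step (a) is correct and complete: Riemann--Hurwitz gives $2\hat g = 4g-2+k$ and $\#\hat\Sigma = 2d-k$ (with $k$ the number of odd-order singularities, necessarily even, so $k\ge 2$ under the stated hypothesis), the transfer map identifies $H_1^+(\hat S,\hat\Sigma;\C)$ with $H_1(S,\Sigma;\C)$ over $\C$, and the resulting count $\dim H_1^-(\hat S,\hat\Sigma;\C) = 2g+d-2$ matches both the size of the family and the dimension of the stratum $\quadra(m_1,\dots,m_d)$. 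This reduces the proposition to linear independence, exactly as you say.

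Step (b) is where the gap is. The assignment $\gamma\mapsto\hat\gamma=\gamma'-\gamma''$ is \emph{not} a well-defined linear map on $H_1(S,\Sigma;\C)$: it requires choosing one of the two lifts, a choice that is neither canonical nor additive, and it is only defined on representatives with trivial linear holonomy in the first place. The canonical linear candidate, namely the transfer $\pi^!(\gamma)=\gamma'+\gamma''$ followed by the anti-invariant projection $\tfrac12(1-\tau_*)$, is identically zero since $\operatorname{im}\pi^!\subset H_1^+$; so there is no ``fixed linear map whose kernel one checks is trivial,'' and your assertion that $\gamma'=\gamma''$ forces $\gamma=0$ is not an argument. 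Moreover the independence is genuinely not a formal consequence of choices: the boundaries $\partial\hat\gamma_j$ land in $\tilde H_0(\hat\Sigma)^-\cong\C^{d-k}$, so whenever $k>2$ some nontrivial combinations of the $\hat\gamma_j$ are absolute anti-invariant cycles, and one must show these are independent from $\hat a_1,\hat b_1,\dots,\hat a_g,\hat b_g$ inside $H_1(\hat S;\C)^-$, whose dimension $2g-2+k$ exceeds $2g$ precisely because of the branching. A correct argument (as in \cite{AEZ}) combines the long exact sequence of the pair restricted to anti-invariant parts with explicit intersection-number computations (e.g.\ $\langle\hat a_i,\hat b_j\rangle=\pm2\delta_{ij}$) and uses that the $\gamma_j$ form a non-crossing chain visiting all the zeros; your sketch does not engage with any of this, so the independence step remains unproven.
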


The dual family of
$\{ \hat a_1, \hat b_1, \dots, \hat a_g, \hat b_g, \hat \gamma_1,
\dots, \hat \gamma_{d-2} \}$ forms a basis of the anti-invariant
$1$-forms,
$$H^1_- (\hat S, \hat \Sigma; \C) \subset H^1(\hat S, \hat \Sigma; \C)$$
where the relative cohomology is a local chart for some abelian
stratum $\mathcal H (\alpha)$.  This period is twice the polygonal
periods we defined above up to a sign.\\

\noindent This is the basis we will be using to express equations of billiard
families.

\subsection{Windtree tables}
\label{windtree:flat}

Let $P$ a filled polygon which does not self-intersect will stand for the shape
of the obstacles in our infinite billiard.  Consider  the plane $\R^2$ on which
we place $P$ periodically centered at each point of a lattice $\Lambda$ as
scatterers such that copies do not overlap.  We denote 
the space consisting of the plane to which we removed
the inside of every obstacle by $\windtree(P, \Lambda)$.

\begin{definition}
	We call $\windtree(P, \Lambda)$ a $\Lambda$-periodic windtree table
	with obstacle $P$.
\end{definition}

Our purpose here is to understand the billiard flow on this infinite table and
its asymptotic speed. We denote the billiard flow by $$\phi_t^\theta:
\windtree(P, \Lambda) \mapsto \windtree(P, \Lambda).$$ For $p \in \windtree(P,
\Lambda)$ the point $\phi_t^\theta(p)$ is the position of the flow after time
$t$ starting from $p$ in direction $\theta$, which moves in straight lines
until it encounters an obstacle on which it bounces according to
Snell-Descartes law of reflection.

\begin{definition}
	In a windtree table $\windtree(P, \Lambda)$ for $d$ the euclidian
	distance on $\R$, $p \in \windtree(P, \Lambda)$ and $\theta \in [0,
	2\pi)$ the diffusion rate is the limit $$ \limsup_{t \to +\infty} \frac
	{\log d(p, \phi_t^\theta(p))} {\log t}.$$
\end{definition}

\subsubsection{Associated flat surface}

As the billiard table $\windtree(P, \Lambda)$ is $\Lambda$-periodic, we may
consider its quotient
$$\bigslant {\windtree(P, \Lambda)}{\Lambda} \simeq \bigslant{\R^2}{\Lambda} -
\mathring P =: \torusl(P)$$ which corresponds to playing billiard in a torus
with one copy of the obstacle $P$ placed in it.
Then we associate to it a flat surface on which the linear flow corresponds to
the billiard flow.\\

Take two copies of $\torusl(P)$ and glue the two copies of each side of $P$
using an isometry fixing the tangent vector and inverting the normal vector
(the axial symmetry along this side).  Now when the flow is bouncing in the
billiard, the geodesic flow of the flat surface is simply changing of copy in
the surface.\\

The gluing maps are changing orientation, hence in order for this surface to be
a flat surface as defined above we choose as a convention two opposite
orientations for the two copies. The change of charts now preserves orientation
and is in $\isoC$. We denote this flat surface by $S(P, \Lambda) = S$.\\

For each triple $(p, \theta, t) \in S \times [0, 2\pi) \times \R_+$ we define
an element $\gamma_t^\theta(p) \in H^1(S; \Z)$ as follows. Consider the
geodesic segment of lengths $t$ starting from $p$ in the direction $\theta$ and
close it by a small piece of curve that does not cross $h_\kappa$ and
$v_\kappa$. The curve used to close the segment can be chosen to be uniformly
bounded.\\

Let $h,v$ be a horizontal and vertical simple loop in $\torusl(P)$ that
generate the homology of the torus.  Let $h^S = h_1 - h_2$ and $v^S = v_1 -
v_2$ where $h_1, h_2$ (resp $v_1, v_2$) are the two lifts of $h$ (resp. $v$) in
$S$. And let a $f \in H^1(S; \Z^2)$ be a cocycle dual of $(h^S, v^S)$ with
respect to the intersection form.\\

The proposition below shows that the diffusion rate of a particle in a windtree
table $\windtree(P, \Lambda)$ can be reduced to the study of the pairing of the
approximate geodesic flow on $S$ with $f$.

\begin{proposition}[1 in \cite{DHL}]
	The diffusion rate of $\phi_t^\theta(\tilde p)$ is equal to
	$$\limsup_{t \to + \infty} \frac{\log \left| \left<f,
	\gamma_t^\theta(p)\right> \right|}{\log t}$$ when $p$ and $\tilde p$
	project to the same point on $\torusl(P)$.
\end{proposition}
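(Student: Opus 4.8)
The plan is to relate the displacement of the billiard trajectory in $\windtree(P,\Lambda)$ to a period computation on the flat surface $S = S(P,\Lambda)$, and then to recognize that period as the pairing $\langle f, \gamma_t^\theta(p)\rangle$. First I would set up the covering picture explicitly: the infinite table $\windtree(P,\Lambda)$ maps $\Lambda$-periodically onto $\torusl(P)$, and lifting the billiard flow to the associated flat surface $S$ replaces the reflections off the obstacle by the linear flow $\psi_t^\theta$ swapping the two copies. A point $\tilde p \in \windtree(P,\Lambda)$ and a point $p \in S$ projecting to the same point of $\torusl(P)$ therefore have billiard/linear trajectories that agree after projection to the torus. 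Then the displacement vector $\tilde p \mapsto \phi_t^\theta(\tilde p)$ in $\R^2$ is, by construction of the $\Lambda$-cover, recorded by how many times the projected trajectory on $\torusl(P)$ wraps around the two generating loops $h$ and $v$ — i.e. by the class in $H_1(\torusl(P);\Z)$ of the closed-up trajectory, evaluated against the coordinates dual to $h$ and $v$.

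Next I would transfer this homological bookkeeping up to $S$. The trajectory on $S$ of length $t$, closed up by the uniformly bounded arc not meeting $h_\kappa, v_\kappa$, defines the class $\gamma_t^\theta(p) \in H_1(S;\Z)$ (I am reading the statement as using Poincaré–Lefschetz duality to view it in $H^1(S;\Z)$ as written). Since the covering map $S \to \torusl(P)$ pushes $h^S = h_1 - h_2$ to $2h$ (or $0$, depending on the sheet bookkeeping) and likewise for $v^S$, pairing $\gamma_t^\theta(p)$ against the dual cocycle $f$ of $(h^S, v^S)$ computes exactly the net winding of the downstairs trajectory around $h$ and $v$, up to a bounded multiplicative and additive constant. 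Concretely, I would show there is a constant $C$ with
$$\bigl| \, d(\tilde p, \phi_t^\theta(\tilde p)) - \bigl| \langle f, \gamma_t^\theta(p)\rangle \bigr| \, \bigr| \le C$$
for all $t$, where the constant absorbs the diameter of the fundamental domain, the choice of closing arc, and the comparison of the $\ell^\infty$/lattice-norm with the Euclidean distance $d$.

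Finally I would feed this bounded comparison into the $\limsup$. Since $\log$ is monotone and $\log(x \pm C)/\log t \to$ the same limit as $\log x / \log t$ when $x \to \infty$ (and the degenerate case where the displacement stays bounded forces the pairing to stay bounded too, so both sides are $\le 0$ and the equality is trivial), we get
$$\limsup_{t\to+\infty} \frac{\log d(\tilde p, \phi_t^\theta(\tilde p))}{\log t} = \limsup_{t\to+\infty} \frac{\log \bigl| \langle f, \gamma_t^\theta(p)\rangle \bigr|}{\log t},$$
which is the claim. The main obstacle I anticipate is the bookkeeping in the middle step: making precise that the two sheets of $S$ and the orientation-reversing gluings conspire so that $f$ really does read off the $\Lambda$-translation part of the billiard flow rather than some other homology class, and checking that the closing arcs (which are only defined up to the constraint of avoiding $h_\kappa, v_\kappa$) do not spoil the uniform bound. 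Everything else — the monotonicity of $\log$, the norm comparison, the degenerate case — is routine. Since this is essentially Proposition 1 of \cite{DHL}, I would follow that argument, adapting only the fact that here $P$ is a more general right-angled polygon rather than a rectangle, which affects none of the homological reasoning above.
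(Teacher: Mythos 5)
The paper does not prove this statement itself; it imports it verbatim as Proposition~1 of \cite{DHL}, and your reconstruction follows exactly the argument given there: compare the Euclidean displacement with the intersection pairing against $f$ up to a uniform additive constant (coming from the fundamental domain, the closing arc, and the sheet/orientation bookkeeping for $h^S=h_1-h_2$, $v^S=v_1-v_2$), then observe that a bounded error is invisible in $\limsup \log(\cdot)/\log t$. This is correct and is essentially the same approach as the cited proof, so nothing further is needed.
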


\section{Lyapunov exponents}
\label{Lyapunov}

In the previous section, we have seen that the diffusion rate on a windtree
table is related to the asymptotic pairing of a cohomology class with a
modified linear flow on the associated translation surface. We will consider
throughout this section a translation $S$ in some abelian stratum $\mathcal
H(\alpha)$ which $\SL(2,\R)$-orbit closure is the affine invariant subspace
$\mathcal M \subset \mathcal H(\alpha)$ and $\nu_{\mathcal M}$ its invariant
measure.\\

The following theorem relates the diffusion rate with a Lyapunov exponent,

\begin{theorem*}[2 in \cite{DHL}]
	Let $F_1 \supset F_2 \supset \dots \supset F_{k}$ be the Oseledets flag
	decomposition of the Kontsevich-Zorich cocycle on $\mathcal M$, and
	$\lambda$ be its top Lyapunov exponent.  For every $\nu_{\mathcal
	M}$-Oseledets generic translation surface $S \in \mathcal H(\alpha)$,
	for every point $p \in S$ with infinite forward orbit, for all $f \in
	F_1 \setminus F_2$,
	$$\limsup_{t \to + \infty} \frac{\log |\left<f, \gamma_t(p)\right>|}{\log t} = \lambda.$$
\end{theorem*}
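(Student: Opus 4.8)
The plan is to compare the pairing $\langle f,\gamma_t(p)\rangle$ with the Kontsevich--Zorich cocycle $A(g_s,\cdot)$ on cohomology over the Teichmüller geodesic flow $g_s=\mathrm{diag}(e^s,e^{-s})$ on $\mathcal M$, and then extract the growth rate from the Oseledets theorem. Normalising by $\SL(2,\R)$ we may assume the flow direction is vertical. Fix a closed $1$-form $\eta_f$ representing $f$ on $S$, and let $\sigma_t$ be the vertical segment of length $t$ underlying $\gamma_t(p)$. Since the closing arc can be chosen of bounded length on the \emph{fixed} surface $S$, we have $\langle f,\gamma_t(p)\rangle=\int_{\sigma_t}\eta_f+O(1)$, uniformly over all $p$ with infinite forward orbit. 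Gauss--Manin transport along the Teichmüller path is exact, so with $s=\log t$,
\[
\bigl|\langle f,\gamma_t(p)\rangle\bigr|\;=\;\Bigl|\int_{g_{\log t}\sigma_t}\eta_{A(g_{\log t},S)f}\Bigr|+O(1)\;\le\;C\,\bigl\|A(g_{\log t},S)f\bigr\|+O(1),
\]
where we used that $g_{\log t}$ contracts the vertical by $e^{-\log t}=1/t$, so $g_{\log t}\sigma_t$ has length $O(1)$, and that the $C^0$-norm of $\eta_{A(g_s)f}$ on $g_sS$ is controlled by the Hodge norm of $A(g_s)f$ up to a factor that is subexponential along a $\nu_{\mathcal M}$-generic geodesic (standard, cf.\ Forni). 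Taking logarithms, dividing by $\log t$ and invoking the Oseledets theorem for $f\in F_1\setminus F_2$ (for which $\tfrac1s\log\|A(g_s,S)f\|\to\lambda$) yields $\limsup_{t\to\infty}\frac{\log|\langle f,\gamma_t(p)\rangle|}{\log t}\le\lambda$; only the Oseledets class of $f$ enters, which is why the bound is uniform over $F_1\setminus F_2$.

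For the reverse inequality, one first uses recurrence of $\{g_sS\}$ to a compact set $K\subset\mathcal M$, which holds for $\nu_{\mathcal M}$-generic $S$ since $\nu_{\mathcal M}$ is finite (Poincaré recurrence). Choosing return times $s_n\to\infty$ with $g_{s_n}S\in K$ and setting $t_n=e^{s_n}$, the identity above reads $\langle f,\gamma_{t_n}(p)\rangle=\int_{g_{s_n}\sigma_{t_n}}\eta_{A(g_{s_n},S)f}+O(1)$, where $g_{s_n}\sigma_{t_n}$ is a vertical segment of length in $[\tfrac12,2]$ on a surface in $K$. It then suffices to produce $\delta>0$ such that for large $n$ the pairing of the normalised class $A(g_{s_n},S)f/\|A(g_{s_n},S)f\|$ with the homology class of such a bounded segment is $\ge\delta$: combined with the Oseledets asymptotics $\|A(g_{s_n},S)f\|=e^{(\lambda+o(1))s_n}$ this gives $\limsup\ge\lambda$. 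Equivalently, writing $\pi_1$ for the projection onto the top Oseledets subspace in homology, one checks that $\limsup_t\frac{\log\|\pi_1\gamma_t(p)\|}{\log t}=\lambda$ (the top oscillating component of $\gamma_t(p)$ does not permanently decay), and that $f\in F_1\setminus F_2$ pairs non-degenerately with $\pi_1\gamma_t(p)$ because the Oseledets flag $F_\bullet$ in cohomology is biorthogonal to the one in homology.

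The main obstacle is precisely this transversality/non-degeneracy input in the lower bound: showing that a bounded-length vertical segment on a surface in the recurrence compactum captures a definite proportion of the exponentially growing cocycle norm, uniformly along the sequence of return times. Concretely, after passing to a subsequence $g_{s_n}S\to S_\infty\in K$ with normalised classes converging to a unit vector $v_\infty$, one must rule out that $v_\infty$ integrates to zero against every bounded vertical segment of $S_\infty$; such a $v_\infty$ would annihilate all vertical cycles, which is incompatible with its lying in the expanding subspace of the Kontsevich--Zorich cocycle, and continuity then propagates the lower bound $\delta$ to all large $n$. Everything else---the contraction of $\sigma_t$ under $g_{\log t}$, the $O(1)$ control of the closing arcs, and the passage to Lyapunov exponents via Oseledets---is routine once the comparison with the cocycle is in place, and the final statement is uniform over all points $p$ with infinite forward orbit and all $f\in F_1\setminus F_2$.
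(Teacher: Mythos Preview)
The paper does not prove this statement: it is quoted verbatim as ``Theorem 2 in \cite{DHL}'' and used as a black box, so there is no in-paper proof to compare against. Any comparison would have to be with the original argument in \cite{DHL}, which in turn rests on Zorich's deviation theorem.

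Your outline does follow the Zorich--Forni strategy underlying \cite{DHL}: renormalise the vertical segment $\sigma_t$ by $g_{\log t}$, transport $f$ by the Kontsevich--Zorich cocycle, and read off the exponent from Oseledets. The upper bound is essentially correct as written; the only point requiring care is the comparison between the sup-norm of a harmonic representative and the Hodge norm, which is indeed subexponential along a generic geodesic but is not a triviality (it uses Forni's variational estimates or the comparison via systole).

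The lower bound, however, is where your sketch has a genuine gap. The compactness-and-limit argument you give is not quite the mechanism used in \cite{DHL} (or in Zorich's original work), and as written it does not close. Two issues: first, you need recurrence \emph{and} Oseledets regularity to hold simultaneously along the sequence $s_n$, which requires more than Poincar\'e recurrence---one uses that the set of $s$ with $g_sS\in K$ has positive density together with the Oseledets convergence along the full ray. Second, and more seriously, the claim that a unit vector $v_\infty$ in the top Oseledets space cannot annihilate all bounded vertical segments is not justified by ``lying in the expanding subspace'': the expanding subspace is defined dynamically, not by pairing with vertical cycles, and a limiting class could in principle be represented by $\mathrm{Re}(\omega)$ itself (which \emph{does} vanish on vertical segments) unless you separately invoke that $\lambda<1$ on the relevant piece or use symplectic non-degeneracy of the Oseledets splitting. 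The actual argument in \cite{DHL} (following Zorich) bypasses this by working with a Veech/Rauzy-type first-return construction: one builds a genuine \emph{closed} cycle at each renormalisation step whose class has a definite component in the top homological Oseledets space, and the symplectic duality between the homology and cohomology flags then gives the non-vanishing of $\langle f,\cdot\rangle$ directly. Your biorthogonality remark is the right idea, but it needs to be applied to actual closed cycles produced by the renormalisation, not to open segments on a limiting surface.
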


In \cite{EC} is proven that any translation surface
$S \in \mathcal H(\alpha)$ such that its $\SL(2,\R)$-orbit closure is
$\mathcal M$ is Oseledets generic in Lebesgue-almost every
direction. In particular they show the following theorem,

\begin{theorem}{1.5 in \cite{EC}}
	Fix $S \in \mathcal H_1(\alpha)$ and let $\mathcal M = \overline{\SL(2,\R)\cdot S}$ 
	the smallest affine invariant manifold containing $S$, 
	let $V$ be a $\SL(2,\R)$ invariant subbundle of the Hodge bundle which 
	is defined and continuous on $\mathcal M$. 
	Let $A_V : \SL(2,\R) \times \mathcal M \rightarrow V$ denote the restriction 
	of the Kontsevich-Zorich cocycle to $V$ and suppose that $A_V$ is strongly
	irreducible with respect to the affine measure $\nu_{\mathcal M}$ whose
	support is $\mathcal M$. Then, for almost every $\theta \in [0, 2\pi)$,
	$$\lim_{t \to \infty} \frac{\log || A_V(g_t r_\theta x)||}{\log t} \to \lambda_1$$ 
	where $\lambda_1$ is the top Lyapunov exponent of $A_V$.
\end{theorem}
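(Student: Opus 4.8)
The plan is to deduce the statement from two essentially independent ingredients, following the principle that for a \emph{fixed} base point the straight-line flow in a \emph{random} direction should behave, both for Birkhoff averages and for the Oseledets cocycle, like a $\nu_{\mathcal M}$-typical orbit of the Teichmüller geodesic flow $g_t$. The first ingredient is \emph{equidistribution}: for Lebesgue-almost every $\theta$ the orbit $(g_t r_\theta x)_{t \ge 0}$ equidistributes towards $\nu_{\mathcal M}$. The second is a \emph{transfer principle} upgrading Oseledets genericity of $A_V$ from $\nu_{\mathcal M}$-almost every point of $\mathcal M$ to this particular orbit, and it is here that the strong irreducibility hypothesis enters.

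For the first ingredient I would push forward the uniform probability measure $\mu_x$ on the circle $\{ r_\theta x : \theta \in [0, 2\pi) \}$ under $g_t$ and consider the time averages $\mu_{x,T} := \frac{1}{T}\int_0^T (g_t)_* \mu_x \, dt$. Using quantitative non-divergence for the $\SL(2,\R)$-action on strata (Eskin--Masur type recurrence), one shows the family $(\mu_{x,T})_{T>0}$ is tight, so every weak-$*$ limit is a probability measure; a routine averaging argument shows it is $\SL(2,\R)$-invariant, hence by the measure and orbit-closure classification of \cite{EMM} it is an affine measure. Since $\overline{\SL(2,\R)\cdot x} = \mathcal M$, the only possible limit is $\nu_{\mathcal M}$, so $\mu_{x,T} \to \nu_{\mathcal M}$. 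This gives Birkhoff genericity \emph{on average over} $\theta$; to pass to almost every $\theta$ one runs the usual bootstrap — test against a countable dense family of continuous functions along a lacunary sequence of times, and control the oscillation in $\theta$ of $\frac{1}{T}\int_0^T \varphi(g_t r_\theta x)\,dt$ via a maximal inequality together with Borel--Cantelli.

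For the second ingredient, note that by Oseledets and ergodicity of $g_t$ on $(\mathcal M, \nu_{\mathcal M})$ one has $\frac{1}{t}\log\|A_V(g_t, y)\| \to \lambda_1$ for $\nu_{\mathcal M}$-almost every $y$, the operator norm being governed by the top exponent. The subtlety — and I expect this to be the main obstacle — is that $y \mapsto \frac{1}{t}\log\|A_V(g_t, y)\|$ is not a continuous function of the measure, so equidistribution alone does not transfer the conclusion; one must exclude that for a positive-measure set of $\theta$ the growth rate along $g_t r_\theta x$ is strictly smaller than $\lambda_1$. I would argue on the projectivisation $\mathbb{P}(V)$: slow operator-norm growth along a direction forces the flow to eventually concentrate the relevant directions into the sub-$\lambda_1$ Oseledets subspaces, and a positive-measure set of such $\theta$ would produce a $g_t$-invariant, $r_\theta$-averaged family of proper subbundles of $V$ along $\mathcal M$ supporting this behaviour, contradicting strong irreducibility. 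Concretely, combining the equidistribution of the first step with a Furstenberg/martingale argument, the stationary measures on $\mathbb{P}(V)$ for the $SO(2)$-averaged, $g_t$-pushed dynamics give no mass to any proper subbundle (precisely because $A_V$ is \emph{strongly} irreducible, not merely irreducible), and integrating the logarithmic-expansion cocycle against such a measure yields exactly $\lambda_1$; hence the limit equals $\lambda_1$ for almost every $\theta$.

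The crux is therefore the second step — transferring the value of the top exponent from the invariant measure to a single orbit in a random direction — where the full force of \emph{strong} irreducibility is used to rule out cycling among finitely many proper subbundles, while the non-divergence estimates preventing escape of mass in the equidistribution step are a technical but indispensable input.
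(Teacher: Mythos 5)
Your first ingredient (tightness of the circle averages via non-divergence, invariance of weak-$*$ limits, identification with $\nu_{\mathcal M}$ by the \cite{EMM} classification) is sound and is indeed how Birkhoff genericity in almost every direction is obtained. But the second step, which you yourself flag as the crux, is where the proposal has a genuine gap rather than a proof. Saying that a positive-measure set of directions with sub-$\lambda_1$ growth ``would produce a $g_t$-invariant, $r_\theta$-averaged family of proper subbundles of $V$ contradicting strong irreducibility'' is a restatement of the difficulty, not an argument: a bad set of directions for the single surface $x$ does not by itself yield a measurable $\SL(2,\R)$-equivariant family of subbundles over all of $\mathcal M$, and the stationary measures on $\mathbb{P}(V)$ you invoke govern the behaviour of the \emph{random walk} almost surely, not of the deterministic orbit $g_t r_\theta x$ for almost every fixed $\theta$. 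Even granting convergence of the lifted circle averages on $\mathcal M \times \mathbb{P}(V)$ to the Furstenberg measure, integrating the expansion cocycle against it only controls the growth rate \emph{on average over} $\theta$; upgrading to almost every individual $\theta$ is exactly the same average-versus-pointwise obstruction you already met in the Birkhoff step, now for a quantity that is a Birkhoff sum along the skew product, whose genericity is precisely what needs proving.

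The route taken in \cite{EC}, and reproduced in the appendix of this paper for the vector version, closes this gap with two tools your proposal never mentions. First, the \emph{sublinear tracking lemma}: for almost every $\theta$ one writes $g_{\lambda n} r_\theta = \epsilon_n g_n \cdots g_1$ with $\frac{1}{n}\log\|\epsilon_n\| \to 0$, where the $g_i$ are i.i.d.\ with an $\SO(2)$-bi-invariant law $\mu$; this is the bridge that legitimately transfers statements from the random walk (where Furstenberg's theory of $\mu$-stationary measures and Lemma C.10 of \cite{EM} apply, and where strong irreducibility enters) to the geodesic flow in a random direction. Second, the block decomposition lemma (2.11 and 2.16 in \cite{EC}): outside a set of density $4\epsilon$, time is covered by blocks of fixed length $L$ on which the cocycle applied to the evolving vector grows at rate $\lambda_1 \pm \epsilon$, and summing over blocks gives the limit. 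Without sublinear tracking, or some substitute of comparable strength, your projective-dynamics sketch does not yield the conclusion for a single orbit in almost every direction.
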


A little modification in their argument, which we delay to the Annex,
enables us to show an additional lemma to this theorem.

\begin{lemma}
	In the previous theorem, for any $h \in V$ and almost every $\theta \in
	[0, 2\pi)$,
	$$ \lim_{t \to \infty} \frac{\log || A_V(g_t r_\theta x) h||}{\log t}
	\to \lambda_1.$$
	\label{genericity}
\end{lemma}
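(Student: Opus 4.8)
The plan is to deduce the lemma from Theorem~1.5 of \cite{EC} (the statement quoted above) by running the same argument but tracking a fixed vector $h \in V$ instead of the operator norm of the cocycle restriction $A_V$. The key observation is that the conclusion of Theorem~1.5, $\log\|A_V(g_t r_\theta x)\| / \log t \to \lambda_1$ for a.e.\ $\theta$, already gives the upper bound $\limsup_t \log\|A_V(g_t r_\theta x)h\|/\log t \le \lambda_1$ for free and for \emph{every} $h$, since $\|A_V(g_t r_\theta x)h\| \le \|A_V(g_t r_\theta x)\|\,\|h\|$. So the content of the lemma is the matching lower bound: for a.e.\ $\theta$ and \emph{every} fixed $h \in V$ (not just $h$ outside some hyperplane depending on $\theta$), the growth is at least $\lambda_1$.

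For the lower bound I would revisit the mechanism in \cite{EC}: their argument shows that for a.e.\ $\theta$ the forward $g_t r_\theta$-trajectory is Oseledets-generic for a stationary measure obtained from the $\SL(2,\R)$-action, and that strong irreducibility of $A_V$ forces there to be no proper invariant subbundle in which the Oseledets flag could concentrate; in particular, the set of directions $\theta$ for which a given $h$ lies in the slow Oseledets subspace along the $g_t r_\theta$-orbit has measure zero. The point is to exchange the order of quantifiers. For a fixed $h$, Fubini (in $\theta$) together with the Oseledets theorem along the random walk / $\SL(2,\R)$-orbit — combined with strong irreducibility, which guarantees that the ``bad'' hyperplane (the Oseledets subspace where growth is slower than $\lambda_1$) varies measurably and is a genuine proper subspace in each fibre — shows that $\{\theta : h \in \text{slow subspace of } g_t r_\theta x\}$ is null. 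Hence for a.e.\ $\theta$, $\liminf_t \log\|A_V(g_t r_\theta x)h\|/\log t \ge \lambda_1$. Intersecting this full-measure set of $\theta$ with the one from Theorem~1.5 yields the claim for that $h$; since $h$ was arbitrary, we are done.

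Concretely, the steps are: (i) record the upper bound, which is immediate from Theorem~1.5 and submultiplicativity of the norm; (ii) reduce the lower bound to showing that, for fixed $h$, the set of $\theta$ with slow growth of $A_V(g_t r_\theta x)h$ is null; (iii) identify, from the \cite{EC} construction, the stationary measure $\nu$ on $\mathcal M$ and the associated cocycle for which Oseledets applies along the teichmüller geodesic flow in a random direction, and note that the slow Oseledets subspace defines a measurable family of proper subspaces $E^{<\lambda_1}(\theta)$ of the fibre $V_x$; (iv) invoke strong irreducibility to conclude this family cannot be concentrated so as to contain a fixed vector on a positive-measure set of $\theta$ — this is exactly the statement that $A_V$ has no deterministic invariant proper subbundle, so the measure of $\{\theta : h \in E^{<\lambda_1}(\theta)\}$ is zero — and (v) assemble (i) and the resulting lower bound.

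The main obstacle is step~(iv): making precise why strong irreducibility of $A_V$ with respect to $\nu_{\mathcal M}$ prevents a \emph{fixed} $h$ from landing in the slow Oseledets subspace for a positive-measure set of directions $\theta$. This is the delicate point because the slow subspace depends on $\theta$, and one must rule out a conspiracy between the irreducibility (a statement about $\nu_{\mathcal M}$-generic walks) and the specific one-parameter family of directions through the fixed base point $x$. I expect this is handled exactly as the quoted argument of \cite{EC} handles the analogous issue for the norm — the paper says ``a little modification in their argument'' — so the proof in the Annex likely inserts the fixed vector $h$ into their genericity lemma and observes that their zero-measure exceptional set can be taken to depend on $h$ without harm, since we only need the conclusion for each $h$ separately rather than uniformly.
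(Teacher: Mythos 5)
Your decomposition into an easy upper bound plus a lower bound for the fixed vector $h$ is right, and your guiding intuition (strong irreducibility prevents $h$ from being trapped in a slow subspace) points in the correct direction. But step (iv), which you yourself flag as the main obstacle, is the entire content of the lemma, and the mechanism you propose for it does not go through as stated. You want to define a slow Oseledets subspace $E^{<\lambda_1}(\theta)$ along the orbit $g_t r_\theta x$ and show that $\{\theta : h \in E^{<\lambda_1}(\theta)\}$ is Lebesgue-null by Fubini. Two problems: first, applying Oseledets along $g_t r_\theta x$ for the \emph{fixed} base point $x$ presupposes that this trajectory is Oseledets-generic, which is precisely the nontrivial content of \cite{EC} being extended here, so the argument is circular unless you re-enter their machinery; second, strong irreducibility is a hypothesis on the cocycle with respect to the affine measure $\nu_{\mathcal M}$ on $\mathcal M$, not with respect to Lebesgue measure on the circle of directions at $x$, and relating the distribution of the slow subspace over $\theta$ to a stationary measure on the Grassmannian (where the Furstenberg-type non-atomicity, Lemma C.10 of \cite{EM}, actually applies) is exactly the hard step that you defer back to the reference.

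The paper's route avoids the Fubini-in-$\theta$ argument entirely. It first proves a random-walk version of the statement: the block decomposition lemma of \cite{EC} (Lemmas 2.11 and 2.16 there) already bounds $\| A_V(g_{i+L}\cdots g_{i+1},y)v\| / \|v\|$ below by $\exp((\lambda_1-\epsilon)L)$ for \emph{every} vector $v$ on a set of disjoint blocks covering all of $\N$ except a set of density $4\epsilon$, for every $x \in \mathcal M$ and $\mu^{\N}$-a.e.\ word $\overline g$; summing the telescoping logarithms over good blocks and bounding the contribution of the bad indices by $4\epsilon n \log C$ gives $\frac{1}{n}\log\|A_V(g_n\cdots g_1,x)h\| \to \lambda_1$ for any fixed $h$. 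The ``little modification'' is precisely the observation that this block estimate is a statement about vectors, not operator norms. The sublinear tracking lemma, $g_{\lambda n}r_\theta = \epsilon_n g_n\cdots g_1$ with $\frac{1}{n}\log\|\epsilon_n\|\to 0$, together with the polynomial bound $\|A_V(g,x)\|\le C\|g\|^{N}$, then transfers the conclusion to the geodesic flow for a.e.\ $\theta$. To repair your outline, replace steps (iii)--(iv) by this random-walk approximation and tracking argument.
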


This reduces the computation of the diffusion rate of a windtree model to
determining irreducible components of the Kontsevich-Zorich cocycle along
$\SL(2,\R)$-orbits and in which of these is the cohomology class $f$.\\

In our case this will be done by the following irreducibility lemma,
\begin{lemma}
	In strata of quadratic differentials with at most simple poles,
	and more than 3 singularities that are not all of even order,
	the Kontsevich-Zorich cocycle is strongly irreducible on $H^+_1$
	for the action of $\SL(2, \R)$.
\end{lemma}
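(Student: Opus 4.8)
The plan is to reduce \emph{strong} irreducibility to a Zariski–density statement for the monodromy of $H^1_+$ in the full symplectic group, and then to establish that density by producing enough symplectic transvections from degenerations inside the stratum.

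First, identify $H^1_+$ with the pullback $\pi^*H^1(S;\C)\cong H^1(S;\C)$ along the translation cover $\pi:\hat S\to S$ of the half-translation surface $(S,q)$; it is a symplectic space of dimension $2g$ (with $g$ the genus of $S$) and the Kontsevich--Zorich cocycle restricted to it takes values in $\mathrm{Sp}(2g,\R)$ and preserves the Hodge filtration. By Filip's semisimplicity theorem the cocycle on $H^1_+$ over a connected component of $\quadra(\beta)$ is semisimple, with algebraic hull a reductive subgroup $G\subset \mathrm{Sp}(2g,\R)$; Filip's classification moreover shows that the only way to get a proper hull, or a splitting into several factors, is through extra Hodge-theoretic symmetry (real or complex multiplication, or a tensor-type decomposition). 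Since $\mathrm{Sp}(2g,\R)$ is Zariski connected and its standard representation is irreducible and primitive, any subgroup that is Zariski dense in it leaves invariant no finite collection of proper subspaces; so it suffices to prove $G=\mathrm{Sp}(2g,\R)$.

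To do this I produce transvections in the monodromy. For a saddle connection or absolute cycle $\gamma$ on $S$ with trivial linear holonomy, shrinking $\gamma$ and returning along a small loop in the stratum contributes the Dehn twist along $\gamma$ to the monodromy, which acts on $H^1(S;\C)=H^1_+$ as the symplectic transvection $x\mapsto x+\langle x,c\rangle\,c$ with $c=[\gamma]\in H_1(S;\C)$ (nonzero when $\gamma$ is non-separating). The two hypotheses enter precisely here. Because there is a singularity of odd order, $S$ admits a basis of $H_1(S;\Z)$ represented by trivial-holonomy curves (the hypothesis recalled after Proposition~\ref{hatbasis}, following \cite{AEZ}), so the classes $c$ obtained this way span $H_1(S;\C)$; and because there are more than three singularities one can, after deforming within the component to a convenient model surface, realize such curves simultaneously as short saddle connections forming a single connected configuration --- one in which the cycles can be ordered so consecutive ones pair nontrivially. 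The existence of these deformations inside one connected component is governed by the local period-coordinate description of strata, and where a particular surface has a strictly smaller orbit closure one falls back on the classification of orbit closures of \cite{EMM}, \cite{Wright14}, \cite{Wright13}. A group generated by symplectic transvections whose vanishing cycles span $H_1(S;\C)$ and form a connected configuration is Zariski dense in $\mathrm{Sp}(2g,\R)$, by the standard Picard--Lefschetz density argument for transvection groups; hence $G=\mathrm{Sp}(2g,\R)$ and, by the first step, the cocycle on $H^1_+$ is strongly irreducible, as needed to apply Theorem~1.5 in \cite{EC} and Lemma~\ref{genericity}.

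The main obstacle is the middle step: arranging, within a single connected component of $\quadra(\beta)$ and compatibly with the trivial-holonomy constraint, a family of degenerating saddle connections whose vanishing cycles simultaneously span $H_1(S;\C)$ and are connected in the transvection sense, together with the low-genus or low-dimensional components where this connectivity must be checked by hand. It is also where "not all singularities of even order" is indispensable: if every zero has even order the differential may carry no trivial-holonomy cycles at all, the reduction $H^1_+\cong H^1(S)$ via trivial-holonomy bases breaks down, and the conclusion can genuinely fail.
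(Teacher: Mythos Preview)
Your high-level strategy matches the paper's: both reduce strong irreducibility on $H^1_+$ to Zariski density of the monodromy in $\mathrm{Sp}(2g,\R)$. The paper, however, dispatches both halves by citation. For the reduction it observes that the tautological plane $\langle\re\omega,\im\omega\rangle$ sits in $H^1_-$ (not $H^1_+$), so Theorem~1.1 of \cite{EFW} applies and identifies the algebraic hull of the cocycle on $H^1_+$ with the Zariski closure of its monodromy; for the density it invokes Section~6 of \cite{Rodolfo}, where the plus-monodromy is shown to be Zariski dense in $\mathrm{Sp}(2g,\R)$ under exactly the hypotheses of the lemma. Your detour through Filip's semisimplicity and classification is harmless but unnecessary: once the monodromy is Zariski dense in $\mathrm{Sp}$, the hull already contains $\mathrm{Sp}$ and you are done.

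The genuine gap is in how you manufacture transvections on $H^1_+$. A loop around the boundary obtained by shrinking a saddle connection $\gamma$ between two distinct singularities does \emph{not} act on $H^1(S;\C)=H^1_+$ as the transvection $x\mapsto x+\langle x,[\gamma]\rangle[\gamma]$: $\gamma$ is an arc, not a cycle, so $[\gamma]$ does not even live in $H_1(S;\C)$. The vanishing class of such a degeneration lies in relative homology (equivalently, via $\hat\gamma$ in Proposition~\ref{hatbasis}, in $H^1_-$), and its action on absolute $H^1(S)$ is trivial. The transvections that genuinely act on $H^1_+$ come from Dehn twists along \emph{closed} trivial-holonomy curves on $S$, realized as core curves of flat cylinders; arranging enough of these---spanning $H_1(S)$ with a connected intersection graph---inside a single connected component of the stratum is precisely the content of \cite{Rodolfo}, carried out there through Rauzy--Veech groups. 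It is not a consequence of the mere existence of a trivial-holonomy basis of $H_1(S;\Z)$, and your own final paragraph already concedes that this ``middle step'' is the unresolved obstacle.
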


\begin{proof}
	The tautological bundle generated by the real and imaginary part of the
	abelian form associated to a surface in the stratum is contained in
	$H^-$ and not in $H^+$. Thus according to Theorem 1.1 of \cite{EFW},
	the algebraic hull of the Kontsevich-Zorich cocycle is the Zariski
	closure of monodromy. But the monodromy on $H^+$ is Zariski dense in
	$Sp(2g, \R)$ according to Section 6 in \cite{Rodolfo}. Hence $H^+$
	cannot have invariant subspaces for the Kontsevich Zorich cocycle, and
	is strongly irreducible.
\end{proof}

This implies the following,
\begin{corollary}
	Let $S$ be a half-translation surface which $\GL(2,\R)$ orbit is dense
	in a quadratic stratum, then for almost all direction and every point
	$p \in S$ with infinite forward orbit, for all $f \in H^1(S; \R)$,
	$$\limsup_{t \to + \infty} \frac{\log |\left<f, \gamma_t(p)\right>|}{\log t} = \lambda_1$$ 
	where $\lambda_1$ is the top Lyapunov exponent of the quadratic
	stratum.
	\label{diff_rate}
\end{corollary}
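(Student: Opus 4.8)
The plan is to lift the problem to the translation cover, where the class $f$ becomes a vector in the invariant subbundle $H^1_+$, and then to combine the irreducibility of the Kontsevich--Zorich cocycle on $H^1_+$ (the irreducibility Lemma~2 above) with Lemma~\ref{genericity} and the diffusion estimate of \cite{DHL}. Throughout we assume, as in Lemma~2, that the stratum has at most simple poles and more than three singularities, not all of even order (this is the case for all the strata to which the corollary is applied); this is exactly what forces $H^1_+$ to carry no proper invariant subbundle, and some such hypothesis is genuinely needed, since otherwise a class $f$ lying in a slow invariant piece of $H^1_+$ would diffuse strictly slower than $\lambda_1$.

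Let $\pi\colon\hat S\to S$ be the translation cover of $S$, with deck involution $\iota$, so that $q=\omega^2$ on $\hat S$ and $\hat S$ lies in some abelian stratum $\mathcal H(\alpha)$; set $\mathcal M:=\overline{\SL(2,\R)\cdot\hat S}$. Since the covering construction is continuous and $\GL(2,\R)$-equivariant and the $\GL(2,\R)$-orbit of $S$ is dense in the quadratic stratum, $\mathcal M$ is precisely the locus of translation covers of surfaces in that stratum; by \cite{EMM} it is an affine invariant submanifold carrying an affine measure $\nu_{\mathcal M}$ of full support. Over $\mathcal M$ the Hodge bundle splits continuously and $\iota$-equivariantly as $H^1=H^1_+\oplus H^1_-$ into its invariant and anti-invariant parts, both $\SL(2,\R)$-invariant subbundles; the pullback $\pi^*$ identifies $H^1(S;\R)$ with $H^1_+$, and the tautological plane $\vect{\re\omega,\im\omega}$ lies in $H^1_-$. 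By Lemma~2 the restriction $A_{H^1_+}$ of the Kontsevich--Zorich cocycle to $H^1_+$ is strongly irreducible with respect to $\nu_{\mathcal M}$; let $\lambda_1$ be its top Lyapunov exponent, which is by definition the top Lyapunov exponent of the quadratic stratum.

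Fix $f\in H^1(S;\R)\setminus\{0\}$ and put $\hat f:=\pi^*f\in H^1_+\setminus\{0\}$. Applying the theorem of \cite{EC} recalled above with $x=\hat S$ and $V=H^1_+$ -- legitimate since $\overline{\SL(2,\R)\cdot\hat S}=\mathcal M$ and $A_{H^1_+}$ is strongly irreducible -- and then Lemma~\ref{genericity} to the single vector $\hat f$, we obtain a full-measure set of directions $\theta$ along which $\lim_{t\to\infty}\frac{\log\|A_{H^1_+}(g_t r_\theta\hat S)\hat f\|}{\log t}=\lambda_1$; for almost every $\theta$ the surface $r_\theta\hat S$ is moreover Oseledets generic on $\mathcal M$. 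If $p\in S$ has infinite forward orbit in direction $\theta$, then so does any lift $\hat p\in\hat S$, and the comparison between the modified geodesic cocycle on $\hat S$ and the Kontsevich--Zorich cocycle from the proof of Theorem~2 of \cite{DHL}, carried out inside the subbundle $H^1_+$ (renormalise the length-$t$ geodesic segment by $g_t r_\theta$ so that its approximate cycle stays uniformly bounded), gives $\limsup_{t\to\infty}\frac{\log|\langle\hat f,\gamma_t^\theta(\hat p)\rangle|}{\log t}=\lambda_1$ for such $\theta$. Finally the direction-$\theta$ geodesic on $S$ from $p$ is the $\pi$-image of the one on $\hat S$ from $\hat p$, so $\pi_*\gamma_t^\theta(\hat p)=\gamma_t^\theta(p)$ up to a uniformly bounded cycle, whence $\langle\hat f,\gamma_t^\theta(\hat p)\rangle=\langle\pi^*f,\gamma_t^\theta(\hat p)\rangle=\langle f,\pi_*\gamma_t^\theta(\hat p)\rangle$ up to a bounded term; this does not change the logarithmic growth rate, so $\limsup_{t\to\infty}\frac{\log|\langle f,\gamma_t^\theta(p)\rangle|}{\log t}=\lambda_1$ for almost every $\theta$ and every such $p$. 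As $f\in H^1(S;\R)\setminus\{0\}$ was arbitrary, the corollary follows.

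The heart of the matter is the use of Lemma~\ref{genericity}: for a fixed class $\hat f$ one must show that $A_{H^1_+}(r_\theta)\hat f$ avoids the slow Oseledets subspace of the forward orbit $\{g_t r_\theta\hat S\}_{t\ge0}$ for all but a null set of directions $\theta$ -- it does fail to avoid it for some exceptional directions -- and this measurability and transversality statement, with the strong irreducibility of Lemma~2 as the input that makes it available, is exactly what is established in the Annex. The remaining ingredients -- identifying $\mathcal M$ with the cover locus, verifying the hypotheses of Lemma~2, and the routine bookkeeping in the \cite{DHL}-style comparison and in the descent along $\pi$ -- present no real difficulty.
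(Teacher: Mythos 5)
Your proposal is correct and follows essentially the same route the paper intends: the corollary is stated as an immediate consequence of the irreducibility lemma on $H^+_1$, Lemma \ref{genericity}, and the \cite{DHL} comparison between the pairing $\left<f,\gamma_t(p)\right>$ and the Kontsevich--Zorich cocycle, and your write-up simply makes explicit the passage to the translation cover and the identification $H^1(S;\R)\cong H^1_+$ that the paper leaves implicit. Your remark that the hypotheses of the irreducibility lemma must be assumed (and do hold for all strata used later) is a fair and accurate reading of the statement's implicit scope.
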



\section{Orbit closure}

\subsection{Some useful theorems}
\label{closure}

In this section we introduce some lemmas resulting from recent breakthrough in
the theory \cite{EMM}, \cite{Wright14} and \cite{Wright13}.

\begin{lemma}
	Let $\bilfam$ a family of flat surfaces in a fixed stratum which is
	represented in some period coordinates by a real linear subspace $B$. 
	Then for Lebesgue almost every $S \in \bilfam$, the orbit closure is an
	unique $\GL_2(\R)$-invariant suborbifold $\mathcal L$ of the
	Teichm\"uller space. Moreover, in the above period coordinate,
	$\mathcal L$ is a linear subspace $L$ such that $B \subset L$.
	\label{L}
\end{lemma}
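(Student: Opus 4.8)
The plan is to combine the Eskin–Mirzakhani–Mohammadi measure and orbit-closure classification with Wright's field-of-definition and cylinder-deformation results. First I would recall that by \cite{EMM} every $\GL_2(\R)$-orbit closure in a stratum is an affine invariant submanifold $\mathcal L$, cut out in period coordinates by real-linear equations with coefficients in a number field, and that the $\GL_2(\R)$-invariant ergodic measures are exactly the affine measures $\nu_{\mathcal L}$ supported on these manifolds. The key point to extract is that the collection of affine invariant submanifolds of a fixed stratum is \emph{countable} (this is Wright's theorem in \cite{Wright14}, using that each such manifold is defined over $\overline{\mathbb{Q}}$ and has bounded complexity). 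Then for each affine invariant submanifold $\mathcal N$, the set $\bilfam \cap \mathcal N$ is, in the chosen period coordinates, the intersection $B \cap N$ of two real-linear subspaces, hence itself linear of some dimension $\le \dim B$.

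Next I would stratify $\bilfam$ by orbit closure: for $S \in \bilfam$ let $\mathcal L(S) = \overline{\GL_2(\R)\cdot S}$. By the countability above, $\bilfam = \bigcup_{\mathcal N} \{ S \in \bilfam : \mathcal L(S) \subseteq \mathcal N \}$ is a countable union, and each term is contained in the linear slice $B \cap N$. Call $\mathcal N$ \emph{relevant} if $B \subseteq N$ — equivalently if $B \cap N$ has full dimension $\dim B$; for a non-relevant $\mathcal N$ the slice $B \cap N$ is a \emph{proper} linear subspace of $B$, hence Lebesgue-null in $\bilfam$. Therefore, discarding this countable union of null sets, Lebesgue-almost every $S \in \bilfam$ has the property that every affine invariant submanifold containing $S$ is relevant. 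Taking $\mathcal L$ to be the \emph{smallest} affine invariant submanifold containing such an $S$ (which exists and is unique by \cite{EMM}, being the orbit closure itself), we get $B \subseteq L$ as required; uniqueness of $\mathcal L$ as a $\GL_2(\R)$-invariant suborbifold is again \cite{EMM}.

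The one subtlety — and the step I expect to require the most care — is making sure that "smallest affine invariant submanifold containing $S$" is exactly the orbit closure of $S$, rather than just containing it: this is precisely the content of the EMM theorem that $\overline{\GL_2(\R)\cdot S}$ is itself an affine invariant submanifold, so there is no gap, but one must invoke it correctly. A second mild point is that $\bilfam$ need not a priori be all of $B$ in a neighborhood, only "represented by" $B$; it suffices that $\bilfam$ has positive Lebesgue measure in $B$ (or contains an open subset of $B$) for the null-set argument to have content, and this is what "represented in period coordinates by a real linear subspace" is taken to mean. Everything else is the routine observation that a countable union of proper linear subspaces of $B$ is Lebesgue-null in $B$.
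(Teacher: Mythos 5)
Your argument is essentially the paper's: both rest on the countability of affine invariant submanifolds (EMM / Wright), the observation that a linear subspace $N$ meeting $B$ in a set of positive Lebesgue measure must contain $B$, and the removal of the countably many null slices coming from non-relevant submanifolds. The "moreover" clause $B \subset L$ is therefore correctly established.

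However, you have not actually proved the uniqueness assertion of the lemma, which is its real content. "Unique" here does not mean that each individual $S$ has a well-defined orbit closure (that is trivial, and is what your appeal to \cite{EMM} at the end addresses); it means that Lebesgue-almost every $S \in \bilfam$ has the \emph{same} orbit closure $\mathcal L$ --- a single generic orbit closure containing the whole family, which is exactly how the lemma is used afterwards. Your stratification shows that for a.e.\ $S$ the closure $\mathcal L(S)$ contains $B$, but a priori two generic surfaces could have two different relevant closures. The missing step is short and is precisely the paper's argument: since each such $\mathcal L(S)$ contains $B \supseteq \bilfam$ and is closed and $\GL_2(\R)$-invariant, any two generic surfaces $S_1, S_2$ satisfy $S_2 \in \mathcal L(S_1)$, hence $\mathcal L(S_2) \subseteq \mathcal L(S_1)$, and symmetrically, so all generic orbit closures coincide (the paper packages this as the intersection $\mathcal L_0$ of all positive-measure orbit closures). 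Add this and your proof is complete.
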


This is the fundamental lemma in this article. Since it shows the
existence of one generic orbit closure which contains orbit
closures of all surfaces in the family. 

\begin{proof}
	According to \cite{EMM} Proposition 2.16 or alternatively
	\cite{Wright14} Corollary 1.9, there are countably many
	$\GL_2(\R)$-invariant closed orbifolds in each stratum.  Thus at least
	one orbit closure $\mathcal L$ of the family $\bilfam$ intersects
	$\bilfam$ with non-zero Lebesgue measure in $\bilfam$. In period
	coordinates, if two linear subspace $L$ and $B$ intersect with non-zero
	Lebesgue measure in $B$, then $B \subset L$.

	Take now $\mathcal L_0$ intersection of all $\mathcal L$ as above. This
	intersection, as any $\mathcal L$, is a closed $\GL_2(\R)$-invariant
	subset which contains $\bilfam$.  Thus the orbit closure of any point
	of $\mathcal L_0$ is contained in $\mathcal L_0$. This implies that any
	$\mathcal L$ as above coincide with $\mathcal L_0$. Thus for any $S \in
	\bilfam$ which orbit closure has non-zero measure intersection with
	$\bilfam$, $\overline{\GL_2(\R) \cdot S} = \mathcal L_0$.

	Hence for any $S \in \bilfam$ such that $\mathcal N :=
	\overline{\GL_2(\R) \cdot S} \neq \mathcal L_0$, Lebesgue measure of
	$\mathcal N \cap \bilfam$ is zero.  Taking out the countably many such
	subset of $\bilfam$, the set of remaining points is of full Lebesgue
	measure and the orbit closure of each of these points is $\mathcal
	L_0$.
\end{proof}

\begin{lemma}
	Suppose with the notation of the previous lemma, that $B$ contains a
	$\R$-linear subspace $D$.  Let $D_{Re}$ and $D_{Im}$ be the projections
	of $D$ to $H^1(S; \R)$ and $H^1(S; i\R)$.  Then $\operatorname{Vect}_\C (D_{Re},
	D_{Im}) \subset L$ \textit{i.e.}  $L$ contains the $\C$-linear span of
	$D_{Re}$ and $D_{Im}$.
	\label{real} 
\end{lemma}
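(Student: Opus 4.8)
The plan is to leverage Lemma \ref{L}: since $L$ is a $\GL_2(\R)$-invariant linear subspace of the period coordinates containing $B$, and $D \subset B$, it suffices to show that the $\C$-linear span of $D_{Re}$ and $D_{Im}$ is forced to lie in any such $L$. The key structural fact I would use is that $\GL_2(\R)$ acts on period coordinates through its standard representation on $\R^2 \cong \C$ applied coordinate-wise; concretely, writing a period vector as $v = x + iy$ with $x \in H^1(S;\R)$ and $y \in H^1(S;i\R)$ (after identifying $H^1(S;i\R)$ with $H^1(S;\R)$), the rotation $r_\theta \in \SO(2,\R) \subset \GL_2(\R)$ sends $v \mapsto \cos\theta\, x + \sin\theta\, y + i(\cos\theta\, y - \sin\theta\, x)$, up to the standard sign conventions.

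First I would fix an element $d \in D$ with real and imaginary projections $d_{Re}$ and $d_{Im}$. Since $D \subset B \subset L$ and $L$ is $\GL_2(\R)$-invariant, $r_\theta \cdot d \in L$ for every $\theta$. Applying the formula above, $r_\theta \cdot d$ has real part $\cos\theta\, d_{Re} + \sin\theta\, d_{Im}$ (viewing both as elements of the real cohomology via the canonical identification). Evaluating at $\theta = 0$ recovers a vector with real part $d_{Re}$, and at $\theta = \pi/2$ one with real part $d_{Im}$; more directly, since $L$ is a real linear subspace closed under these operations, taking two distinct values of $\theta$ and forming real linear combinations shows that the vectors with real parts $d_{Re}$ and $d_{Im}$ (and the matching imaginary parts) both lie in $L$. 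Then, because $L$ is also invariant under the diagonal subgroup and in particular under the scaling action, combined with the rotation action one generates the full $\C$-span: the action of $\SO(2,\R)$ together with real scalars $\R^*$ generates all of $\C^* = \GL_1(\C)$ acting diagonally, so $L$ contains $\lambda \cdot d$ for every $\lambda \in \C^*$, and hence $\operatorname{Vect}_\C(d_{Re}, d_{Im}) \subset L$. Ranging over all $d \in D$ gives $\operatorname{Vect}_\C(D_{Re}, D_{Im}) \subset L$.

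The main point to get right — and the step I expect to require the most care — is the precise bookkeeping of how $\GL_2(\R)$ acts on the splitting $H^1(S;\C) = H^1(S;\R) \oplus H^1(S;i\R)$ and the identification of $H^1(S;i\R)$ with $H^1(S;\R)$, so that the assertion "$L$ contains the vectors whose real and imaginary parts are $d_{Re}$ and $d_{Im}$" is literally correct rather than merely morally so. One must check that $D_{Re}$ and $D_{Im}$ as defined (projections of $D$, a single real subspace of $\C^n$) are exactly the subspaces swept out, and that no transversality or genericity hypothesis is needed here: the argument is purely about the linear-algebraic closure properties of $L$ under a group action, so it applies to the specific $L$ produced by Lemma \ref{L} without any "almost every" caveat beyond the one already used to produce $L$. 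A clean way to package this is to observe that $\operatorname{Vect}_\C(D_{Re}, D_{Im})$ is the smallest $\GL_2(\R)$-invariant ($\C$-linear, in fact) subspace containing $D$, so any invariant subspace containing $D$ — in particular $L$ — must contain it.
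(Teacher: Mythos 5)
Your route is genuinely different from the paper's. The paper's proof is a one\-liner: by Lemma \ref{L} we have $D \subset L$, and by Wright's field\-of\-definition theorem the affine manifold $L$ is defined over $\R$, i.e.\ $L$ is the complexification of $L_{Re} = L \cap H^1(S;\R)$; hence $d = d_{Re} + i\,d_{Im} \in L$ forces $d_{Re}, d_{Im} \in L_{Re}$, and the $\C$-span of $D_{Re}, D_{Im}$ sits inside $L$. You instead try to derive the conclusion purely from the linear-algebraic fact that $L$ is a real linear subspace invariant under the $\GL(2,\R)$-action on $H^1(S;\R)\otimes\R^2$. That strategy is viable and has the merit of not invoking Wright's theorem, but your key generation step has a genuine gap: the subgroup generated by $\SO(2,\R)$ and real scalars is $\C^*$ acting diagonally, and its orbit of $d = x+iy$ spans only the complex \emph{line} $\C d$, which is strictly smaller than $\operatorname{Vect}_\C(x,y)$ whenever $x$ and $y$ are $\R$-independent (indeed $x \notin \C d$ in that case: $(a+ib)(x+iy) = ax-by + i(bx+ay) = x$ forces $a=b=0$). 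So ``$L \supset \C^* d$, hence $\operatorname{Vect}_\C(d_{Re},d_{Im}) \subset L$'' is a non sequitur, and the closing claim that $\operatorname{Vect}_\C(D_{Re},D_{Im})$ is the smallest invariant subspace containing $D$ is asserted rather than proved.

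The fix is exactly the ingredient you gesture at but do not use: the \emph{non-uniform} diagonal action. Applying $\mathrm{diag}(1,2)$ (or $g_t = \mathrm{diag}(e^t,e^{-t})$) to $d = x+iy \in L$ gives $x + 2iy \in L$, so by taking real differences $iy \in L$ and then $x \in L$; applying the rotation by $\pi/2$ to these gives $y \in L$ and $ix \in L$. The four vectors $x, y, ix, iy$ span $\operatorname{Vect}_\C(x,y)$ over $\R$, which proves your ``smallest invariant subspace'' claim and completes the argument. With that repair your proof is correct and self-contained; without it, the decisive step is missing.
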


This lemma will enable us to show that restrictions on the directions of the sides of
obstacles do not interfere with the orbit closure.

\begin{proof}
  By Lemma \ref{L}, we know that $D \subset L$. By \cite{Wright14}
  the field of definition of such an affine manifold is
  real, in particular it is the complexification of $L_{Re}$.
\end{proof}

\begin{lemma}
	Let $S$ be a half-translation surface, $\Sigma$ the set of its
	singularities, and $\gamma_1, \gamma_2, \dots, \gamma_d$ a basis of
	primitive non-crossing elements of $H_1(S,\Sigma; \Z)$. We denote by
	$\hat \gamma_i$ their periods in $S$.\\ 

	If $\eta$ is the homology of the union
	of core curves of $\mathcal L$-parallel cylinders in the surface
	associated to periods $\hat \gamma_i$. Then for all $\delta$ in a
	neighborhood of zero in	$\mathbb C$, the surface with periods 
	$$\hat \gamma_i + \left< \eta, \gamma_i \right> \delta$$ is in the orbit closure $\mathcal L$.
	\label{cylinder_deformation}
\end{lemma}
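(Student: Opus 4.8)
The plan is to use the cylinder deformation theorem of Wright (\cite{Wright13}). Recall that this theorem asserts that if $\mathcal C$ is an equivalence class of $\mathcal L$-parallel cylinders on a surface $(S,\omega) \in \mathcal L$ (two cylinders being equivalent when they remain parallel throughout $\mathcal L$, equivalently when their core curves are proportional in the quotient $H_1(S,\Sigma;\R)/\operatorname{Ann}(T_{(S,\omega)}\mathcal L)$), then the surface obtained by the standard shear-and-stretch deformation $\operatorname{a}_t\operatorname{u}_s$ along all cylinders of $\mathcal C$ simultaneously stays in $\mathcal L$ for all $(t,s)$ for which it is defined. So the first step is to pass from the half-translation surface $S$ to its translation cover $\hat S$, where the quadratic differential becomes $\omega^2$, so that the $\hat\gamma_i$ genuinely are periods of an abelian differential and the cylinders, singularities, and the locus $\mathcal L$ all lift; the construction and conventions are exactly those recalled before Proposition \ref{hatbasis}.

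The second step is to translate the infinitesimal cylinder deformation into period coordinates. Fix the equivalence class $\mathcal C$ of $\mathcal L$-parallel cylinders whose core curves sum (as a homology class) to $\eta$. The horizontal-shear part of the cylinder deformation adds, to the period of any relative cycle $\gamma_i$, a term proportional to the algebraic intersection number of $\gamma_i$ with the core curves of the cylinders it crosses — that is, proportional to $\langle \eta, \gamma_i\rangle$ — times the common "width-to-area" factor; the vertical-stretch part contributes the corresponding imaginary multiple. Combining the two into a single complex parameter $\delta$ (after normalising so that the purely imaginary rescaling of the core curves is absorbed), the deformed periods are precisely $\hat\gamma_i + \langle\eta,\gamma_i\rangle\,\delta$. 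I would make this identification by writing the cylinder-deformation tangent vector as the cohomology class Poincaré-dual to $\eta$ (this is the standard description: the infinitesimal shear in a cylinder is dual to its core curve), and noting that evaluating this cohomology class on $\gamma_i$ is by definition $\langle\eta,\gamma_i\rangle$.

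The third step is to invoke Wright's theorem to conclude that for all $\delta$ in a neighbourhood of $0$ the deformed surface lies in $\mathcal L$ (for $\delta$ small the deformation is defined — no cylinder degenerates — and $\mathcal L$ is $\GL_2(\R)$-invariant, hence closed under these deformations), then descend back from $\hat S$ to $S$: since the whole construction is equivariant for the deck involution and $\eta$ as well as the $\gamma_i$ are taken downstairs, the deformed translation-cover surface is again the translation cover of a half-translation surface in the quadratic locus corresponding to $\mathcal L$. The main obstacle I expect is bookkeeping rather than conceptual: one must check that $\eta$, defined as the sum of core curves of \emph{one} equivalence class of $\mathcal L$-parallel cylinders, is exactly the class dual to the shear direction, and that the normalisation of $\delta$ (balancing the shear against the area-preserving stretch) produces the clean formula $\hat\gamma_i + \langle\eta,\gamma_i\rangle\delta$ with no extra cylinder-dependent constants; this uses that all cylinders in the class are deformed with the same parameter, which is precisely the content of the cylinder deformation theorem. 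I would also remark that the statement is insensitive to whether $\mathcal L$-parallel is interpreted on $S$ or on $\hat S$, since parallelism of core curves is detected by the same linear equations defining $\mathcal L$ on the anti-invariant part.
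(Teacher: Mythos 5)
Your proposal is correct and follows essentially the same route as the paper: the paper's proof is a two-line appeal to Lemma 4.11 of \cite{Wright13}, observing that the shear-and-stretch cylinder deformation adds $\delta$ to a period once per intersection with the core curves, so that any small $\delta$ is attained. Your additional bookkeeping (lifting to the translation cover, identifying the deformation tangent vector with the Poincar\'e dual of $\eta$, and checking deck-involution equivariance) only fills in details the paper leaves implicit.
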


Once we have conjectured what the generic orbit closure of our billiard should
be, our goal will consist in finding a surface in the family that has some good
cylinder decomposition. Using this lemma there will be surfaces in the orbit
closure breaking some symmetry, and by induction we will show density.

\begin{proof}
	This is a direct corollary of Lemma 4.11 in \cite{Wright13}.
	Each cylinder deformation adds some complex number $\delta$
	to the period of a given path for every intersection.
	By shearing and stretching any $\delta$ can be attained in
	a neighborhood of zero.
\end{proof}

\subsection{Periodic windtree with several obstacles}

Choose a layout for $n$ rectangular obstacles in the plane, all oriented in the
same horizontal direction. Now repeat $\Z^2$-periodically this pattern in the
plane, assuming at an initial step that they do not overlap. In other terms,
pick a square torus in which you place $n$ horizontal rectangular
obstacles. We call $\mathcal B_n$ this family of billiards.  We investigate its
generic $\GL(2,\R)$ orbit closure to compute its diffusion rate.
The case of $\mathcal B_1$ was done in \cite{DHL} in which the authors proved
that the diffusion rate is $2/3$. \\

As in \ref{windtree:flat} we associate to each windtree table in $\mathcal B_n$
a half-translation surface in $\quadra(1^{4n})$. This yields an embedding
$$\mathcal S: \mathcal B_n \mapsto \quadra(1^{4n}) $$\\

\subsubsection{Orbit closure} We prove the following Lemma,
\begin{lemma}
	For Lebesgue almost every windtree table in $B_n$, $n \geq 2$,  the
	image of its associated half-translation surfaces in $\quadra(1^{4n})$
	has a dense $\GL(2,\R)$-orbit.
\end{lemma}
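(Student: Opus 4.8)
The plan is to apply Lemma~\ref{L} to the family $\mathcal B_n$ viewed in period coordinates, so that almost every surface in the family has a common $\GL(2,\R)$-orbit closure $\mathcal L$, and then to show $\mathcal L$ is the whole stratum $\quadra(1^{4n})$. The family $\mathcal S(\mathcal B_n)$ is cut out by linear equations in period coordinates: the obstacles are rectangles with horizontal and vertical sides, so every side is either horizontal or vertical, and the widths/heights are free real parameters; moreover the two copies glued along sides of $P$ force the associated translation cover to lie in the anti-invariant cohomology $H^1_-(\hat S,\hat\Sigma;\C)$. By Lemma~\ref{L}, $\mathcal L$ is a linear subspace $L\supset B$, and by Lemma~\ref{real} the real directions of the obstacle sides impose no further constraint: since $B$ contains a real subspace $D$ of dimension equal to the number of free real length parameters, $L$ contains $\operatorname{Vect}_\C(D_{Re},D_{Im})$. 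This already gives a lower bound on $\dim_{\C} L$ and, more importantly, means we only need to break the remaining discrete symmetries.

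The symmetries to break are: the $\Z/2\times\Z/2$ coming from the horizontal and vertical reflections of a rectangle and the deck involution of the two-copy construction, plus the translation covering involution on $\hat S$. Concretely, $\mathcal L$ a priori sits inside a locus of surfaces with an orientation-reversing symmetry (the $H^1_-$ condition), and we must show $\mathcal L$ is not contained in any proper affine submanifold cut out by such extra symmetries. The tool is Lemma~\ref{cylinder_deformation}: I would exhibit one explicit windtree table in $\mathcal B_n$ whose associated half-translation surface admits a cylinder decomposition in some direction (say the horizontal one, where the cylinders are the strips between consecutive obstacles), compute the core-curve homology class $\eta$ of a single $\mathcal L$-parallel cylinder, and deform by $\delta$ as in the lemma. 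Because the deformation moves only the periods of paths crossing that one cylinder, and leaves the others fixed, the deformed surface no longer has the reflection symmetry that paired those paths; hence the symmetry is not in the defining equations of $\mathcal L$, so $\mathcal L$ strictly contains that symmetric locus. Iterating over the finitely many generators of the symmetry group (choosing different cylinders / different directions at each step), I conclude $\mathcal L$ has trivial "automorphism constraint" and, combined with the dimension count from the first paragraph, $\mathcal L=\quadra(1^{4n})$.

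I expect the main obstacle to be the second step: producing an explicit surface in $\mathcal B_n$ with a usable cylinder decomposition and carefully bookkeeping which homology classes $\langle\eta,\gamma_i\rangle$ are nonzero, so that each deformation genuinely kills a prescribed symmetry rather than accidentally preserving a smaller subgroup. One has to be careful that $n\geq 2$ is used here — with a single obstacle ($\mathcal B_1$) there simply is not enough room to break all symmetries, which is why the answer there is the proper sublocus giving $2/3$ rather than the full stratum. A secondary subtlety is translating Wright's cylinder-deformation statement, phrased for translation surfaces, through the translation-cover correspondence of Proposition~\ref{hatbasis}: the deformations of $\hat S$ must respect the $-1$-eigenspace decomposition, so I would check that the cylinders chosen come in deck-swapped pairs (or are deck-invariant) so that the induced deformation stays in the image of $\mathcal B_n$'s ambient half-translation stratum. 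Once connectivity of $\quadra(1^{4n})$ is invoked (or the relevant component identified), density of the $\GL(2,\R)$-orbit for a.e.\ table follows from Lemma~\ref{L}.
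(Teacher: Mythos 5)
Your skeleton matches the paper's proof exactly: Lemma~\ref{L} to get a common linear orbit closure $L\supset B$, Lemma~\ref{real} to discard the real/imaginary-part constraints coming from the horizontality and verticality of the sides, and Lemma~\ref{cylinder_deformation} to remove the remaining equations one by one. However, as written the proposal has a genuine gap precisely where you say you expect one, and that gap is the substance of the proof. First, you never write down the defining equations of $\mathcal S(\mathcal B_n)$ in period coordinates nor perform the dimension count showing the list is complete; without that you cannot know which equations remain after applying Lemma~\ref{real}, nor conclude that breaking them all yields the full stratum. Second, your description of the remaining constraints as a finite symmetry group ($\Z/2\times\Z/2$ plus the deck involution) with ``finitely many generators'' is a mischaracterization: what remains is a list of roughly $2n$ complex linear equations --- $a_1=a_2$, $b_1=-b_2$, and the closing-up conditions $\alpha_i=-\alpha_i'$, $\beta_i=-\beta_i'$ of each rectangle --- so the number of deformations needed grows with $n$ and the argument is necessarily an induction over obstacles, not an iteration over a fixed group.

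Third, and most importantly, the cylinder constructions are not supplied, and they are delicate for two reasons you only partially anticipate. (i) To break $a_1=a_2$ one must first use the already-established decorrelation of $\gamma_i$ and $d_i$ to move the obstacles \emph{independently in the two copies}, obstructing the flow in one copy so as to obtain a cylinder whose core curve meets $b_1$ but not $b_2$ and is incommensurable with every other cylinder in that direction; a naive horizontal cylinder in a surface of the family always has a symmetric counterpart in the other copy, so the deformation would preserve the very equation you want to kill. (ii) For each deformation one must check that the shifted periods $\hat\gamma_i+\langle\eta,\gamma_i\rangle\delta$ still satisfy \emph{all} the equations not yet broken and violate exactly the targeted one; this forces a specific order (torus equations first, then $\alpha_1$ via a cylinder meeting $b_1,\alpha_1,b_2$ each once, then $\alpha_{i+1}$ via a cylinder meeting $\alpha_i'$ and $\alpha_{i+1}$, using that $\alpha_i'$ no longer occurs in any remaining equation). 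Until these explicit surfaces, cylinders and intersection numbers are produced, the proof is a correct plan rather than a proof.
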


Let $S \in \mathcal S (\mathcal B_n)$, the surface $S$ has genus $n+1$ and the
stratum $\quadra(1^{4n})$ has dimension $6n$. We consider $a_1, b_1, a_2, b_2$
simple loops which generate the homology of the two copies of the torus, and
take $c_1, d_1, \dots, c_{n-1}, d_{n-1}$ the loops around the obstacles and
between two consecutive obstacles. These generate the absolute homology of $S$.
Now for each obstacle $i$, start at the lower left corner and browse the rectangle
clockwise, we denote by $\alpha_i, \beta_i, \alpha_i'$ the three saddle
connection we cover until the lower right corner. Let $\gamma_i$ be a path from
the lower right corner of obstacle $i$ to lower left corner of obstacle $i+1$.

\begin{figure}[h]
  \centering
  \includegraphics[scale=.35]{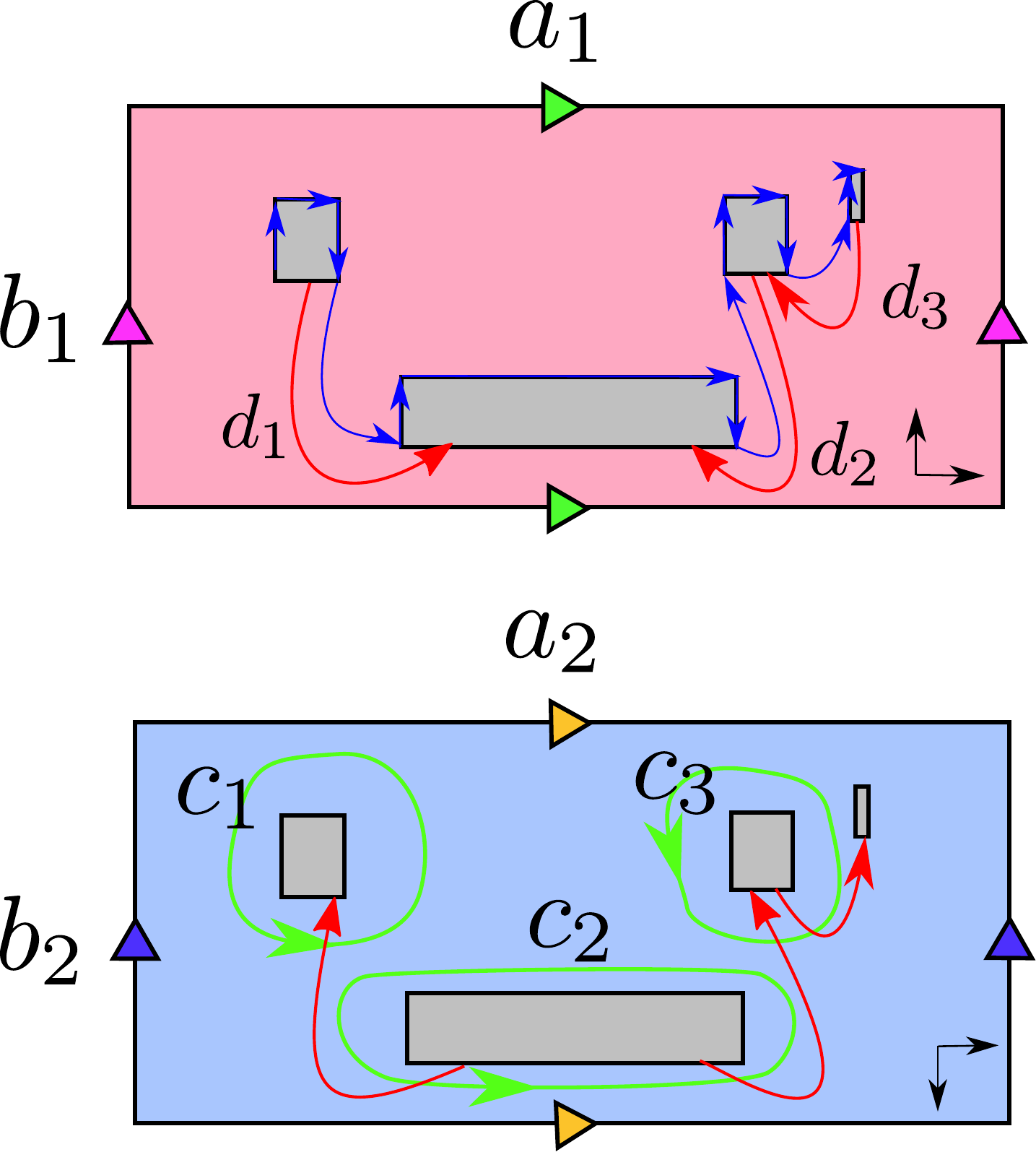}
  \caption{Basis for relative homology.}
  \label{rel_basis}
\end{figure}

These paths form a basis of the relative homology group of $S$. According to
Lemma \ref{hatbasis} if we take the hat image of these homology elements
besides from $\hat \alpha_{n-1}'$ they form a basis of $H^-_1(\hat S, \hat
\Sigma; \C)$ which induce local coordinates in the stratum (see e.g.
\cite{AEZ}) called period coordinates.

We also introduce $\beta'_i$ for $1 \leq i \leq n-1$ the last side of the
rectangle that closes obstacle $i$. In other term, the class that satisfies
$\alpha_i + \beta_i + \alpha'_i + \beta'_i = c_i$. For ways of intersection
numbers with cylinders we will construct later, we will prefer to replace $c_i$
by $\beta'_i$ in the basis and equations. 

To write down equations in period coordinates we need to eliminate an ambiguity
given by the non trivial holonomy of the surface. We choose a fundamental domain
for the action of this holonomy given by the two copies glued along the
vertical sides to which we remove the horizontal sides. This corresponds to
drawing the copies reflected along the horizontal axis. Now the family 
$\mathcal S$ is defined locally by the following equations, where we make the
abuse to write the homology class while meaning their period,

\begin{align}
  \label{eq:first}
  \begin{split}
    & \im(a_1) = 0 \text{, } \re(b_1) = 0 \text{, } \re(a_1) = \im(b_1)\\
    & \im(\beta_i) = 0 \text{, } \re(\alpha_i) = 0 \text{ for all } 1 \leq i \leq n\\
    & d_i = \gamma_i - \overline \gamma_i \text{ for all } 1 \leq i
    \leq n-1
  \end{split}\\
  \nonumber \vspace{1cm}\\
  \label{eq:second}
  \begin{split}
    & a_1 = a_2 \text{, } b_1 = -b_2\\
    & \alpha_i = -\alpha_i' \text{ for all } 1 \leq i \leq n-1\\
    & \beta_i = -\beta_i' \text{ for all } 1 \leq i \leq n-1\\
  \end{split}\\
  \nonumber \vspace{1cm}
\end{align}

There are $2n+3$ real equations and $3n-1$ complex equations.  The quadratic
stratum is of complex dimension $6n$, thus the induced subspace is of real
dimension $12n -2n -6n -1 = 4n - 1$.  On the other hand for the family of
billiards, we have $2n$ variables for the size of each obstacle, $2n-2$ for
relative position of the obstacles, and $1$ dimensions for the size of the
square torus. Thus we have indeed listed all the equations that define our
billiard family.\\

Below we show that these two sets of equations do not constrain the generic
orbit closure for our billiards which as a consequence will be the whole
stratum.  The first argument relies on Lemma \ref{real} and the second on Lemma
\ref{cylinder_deformation}.\\

First remark that the periods appearing in equations $(\ref{eq:first})$  are
not constrained by equations $(\ref{eq:second})$.  Lemma \ref{real} then
implies that the affine space corresponding to the orbit closure $L$ contains
$\operatorname{Vect}_\C \left(a_1, \beta_i, d_i, \gamma_i\right)$ and $\operatorname{Vect}_\C \left(b_1,
\alpha_i, \gamma_i\right)$ and consequently does not satisfy neither of the
equations in (\ref{eq:first}). We have shown that the orbit closure contains the
space defined by equations (\ref{eq:second}). \\

In the following we demonstrate inductively that $L$ contains affine spaces
defined by a smaller subset of equations in (\ref{eq:second}) which will eventually be
empty. To do so we point out surfaces in the space defined by the given
subset which have a cylinder rationally independent to any other ones in the
same direction. We will show that all but one equations of this subset are
respected by the shifted periods in Lemma \ref{cylinder_deformation}. This
will imply that the orbit closure contains the subspace defined by all but this
latter equation.\\

We want to decorrelate the periods of $b_1$ and $b_2$ but in the family a
cylinder in the torus along $a_1$ has always a symmetric counter-part along
$a_2$. We use the fact proven above that in the orbit closure $\gamma_i$ and
$d_i$ have no correlation thus we can move the obstacles in the two copies
independently. Figure \ref{cylinder2} shows how to have a cylinder in one torus
and not in the other by moving the obstacles and obstructing the flow in one
copy. For a generic choice of lengths, the hatched cylinder is not
commensurable to any other cylinder and its core curve intersects only $b_1$.
The cylinder deformation breaks the relation between $a_1$ and $a_2$ and the
same construction in the vertical direction  breaks the relation between $b_1$
and $b_2$. As a result, the affine space $L$ contains the space defined by
equations (\ref{eq:second}) minus the equations on $a$ and $b$.

\begin{figure}[h!]
  \centering
  \includegraphics[scale=.4]{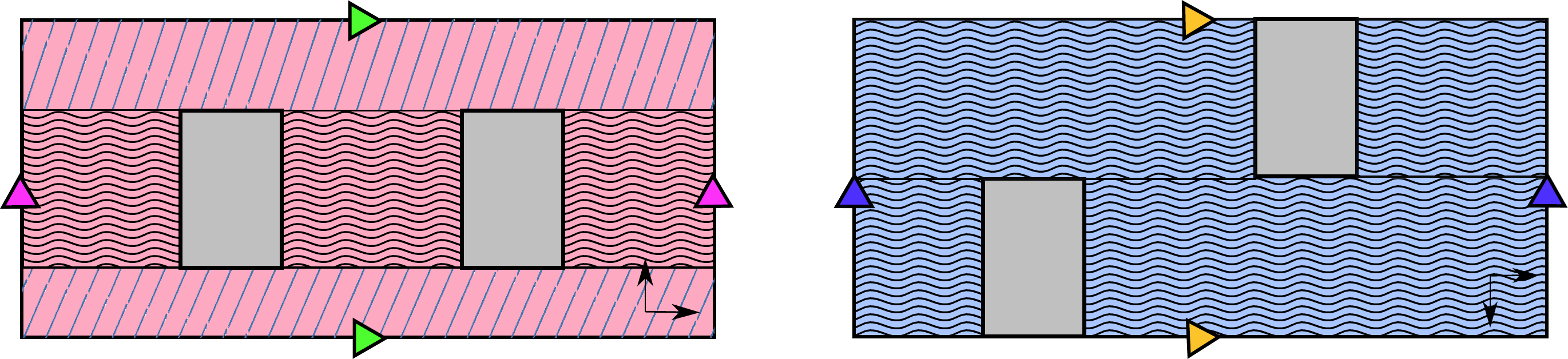}
  \caption{Good cylinder decomposition to deform $b_1$}
  \label{cylinder2}
\end{figure}

Consider now the billiard with the same square obstacles of irrational side
length such that all the obstacles are aligned in order. The distances between
the obstacles are chosen such that they are rationally independent. On these
surfaces there is a full decomposition in cylinders and all of the cylinders
are rationally independent. The cylinder going from the right of the last
obstacle to the left of the first intersects $b_1, \alpha_1$ and $b_2$.  The
number of intersection of the core curve with each one of these curves is one.
The previous argument has eliminated the constrains on $b_1$ and $b_2$ thus
this cylinder deformation breaks the relation between $\alpha_1$ and
$\alpha'_1$.

Now by induction we take the cylinder intersecting $\alpha'_i$ and
$\alpha_{i+1}$. By assumption $\alpha'_i$ does not appear in any equation and
so we can break the relation between $\alpha_{i+1}$ and $\alpha'_{i+1}$.

\begin{figure}[h!]
  \centering
  \includegraphics[scale=.4]{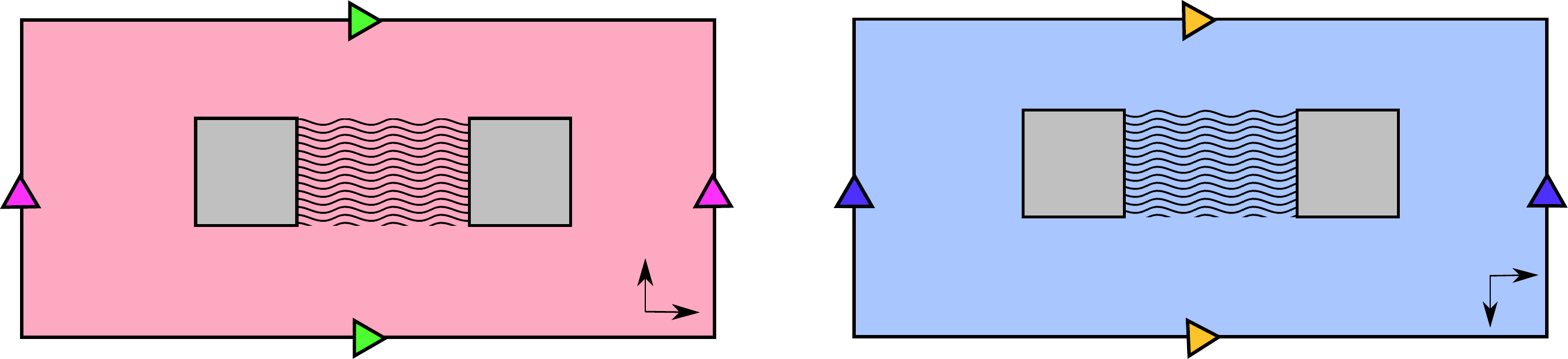}
  \caption{Good cylinder decomposition to deform $\alpha_i$}
  \label{cylinder}
\end{figure}

The same argument can be applied in the vertical direction for $\beta_i$ and
$\beta'_i$. This ends the proof of generic density for billiards in $\mathcal S
(\mathcal B_n)$.\\

This density result together with Lemma \ref{L} and
Corollary \ref{diff_rate} imply the following,
\begin{theorem}
  The diffusion rate for Lebesgue-almost every windtree model in 
  $B_n$ with $n \geq 2$ in Lebesgue-almost every direction is equal
  to the top Lyapunov exponent of $\quadra(1^{4n})$.
\end{theorem}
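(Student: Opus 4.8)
The plan is to assemble the final theorem from three ingredients already established in the paper: the reduction of the diffusion rate to a pairing of the flow with a cohomology class (Proposition 1 of \cite{DHL} together with the flat surface construction of Section~\ref{windtree:flat}), the orbit-closure computation of the preceding lemma, and the genericity/irreducibility package of Section~\ref{Lyapunov} (Lemma~\ref{genericity}, the irreducibility lemma, and Corollary~\ref{diff_rate}). So the proof is essentially a matter of chaining these together carefully, checking that the hypotheses of each match the conclusions of the previous one.

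First I would recall the setup: to a windtree table $W \in \mathcal B_n$ one associates via $\mathcal S$ a half-translation surface $S(P,\Lambda) \in \quadra(1^{4n})$, and by Proposition~1 of \cite{DHL} the diffusion rate of the billiard flow $\phi_t^\theta(\tilde p)$ equals $\limsup_{t \to +\infty} \frac{\log |\langle f, \gamma_t^\theta(p)\rangle|}{\log t}$, where $f \in H^1(S; \R)$ is the cocycle dual to $(h^S, v^S)$ and $p$ projects to the same point of $\torusl(P)$ as $\tilde p$. Next, the lemma immediately preceding the theorem states that for Lebesgue-almost every $W \in \mathcal B_n$ (with $n \geq 2$) the surface $\mathcal S(W)$ has dense $\GL(2,\R)$-orbit in $\quadra(1^{4n})$; equivalently, its $\SL(2,\R)$-orbit closure is the full stratum. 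This is the point where Lemma~\ref{L} is invoked to upgrade "positive measure" statements into an almost-everywhere statement over the family.

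Then I would apply Corollary~\ref{diff_rate} directly: since $\mathcal S(W)$ is a half-translation surface whose $\GL(2,\R)$-orbit is dense in the quadratic stratum $\quadra(1^{4n})$, the corollary gives that for almost every direction $\theta$ and every point $p$ with infinite forward orbit, and for \emph{all} $f \in H^1(S;\R)$ (in particular our specific dual cocycle $f$), $$\limsup_{t \to +\infty} \frac{\log |\langle f, \gamma_t^\theta(p)\rangle|}{\log t} = \lambda_1,$$ where $\lambda_1$ is the top Lyapunov exponent of the stratum. Here it matters that Corollary~\ref{diff_rate} is stated for \emph{all} $f$, not just $f$ outside the second Oseledets subspace — this is precisely what Lemma~\ref{genericity} buys over the bare Theorem~2 of \cite{DHL}, and it is also why the irreducibility lemma is needed (strong irreducibility of the Kontsevich--Zorich cocycle on $H^+_1$ is a hypothesis of the \cite{EC}-type result, verified here because $\quadra(1^{4n})$ has $4n \geq 8 > 3$ singularities, none of which is a pole, and they are not all of even order since $1$ is odd). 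Combining with the \cite{DHL} reduction of the previous paragraph yields that the diffusion rate of almost every $W \in \mathcal B_n$, in almost every direction, equals $\lambda_1$.

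The only genuinely delicate point is the interchange of the two "almost every" quantifiers: the density lemma holds for a.e.\ table, Corollary~\ref{diff_rate} holds for a.e.\ direction given a table with dense orbit, and one wants the conclusion for a.e.\ (table, direction) pair — but this is immediate by Fubini once the density lemma has removed a measure-zero set of bad tables, so I do not expect it to cause trouble. I expect no real obstacle in this final theorem itself: all the substantive work (the cylinder-deformation induction for density, the irreducibility argument, the extension of \cite{EC}) has been done in the preceding sections, and the theorem is the straightforward synthesis. If anything, the one thing to double-check is that the cohomology class $f$ dual to $(h^S, v^S)$ really lies in $H^1(S;\R)$ as opposed to some relative or twisted cohomology, so that Corollary~\ref{diff_rate} applies verbatim; this follows from the construction in Section~\ref{windtree:flat}, where $h^S = h_1 - h_2$ and $v^S = v_1 - v_2$ are honest absolute cycles.
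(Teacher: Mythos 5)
Your proposal is correct and follows exactly the paper's route: the paper's entire proof of this theorem is the single sentence ``This density result together with Lemma \ref{L} and Corollary \ref{diff_rate} imply the following,'' and your chaining of the \cite{DHL} reduction, the density lemma, and Corollary \ref{diff_rate} (with the irreducibility and genericity lemmas behind it) is precisely that synthesis spelled out. The extra checks you flag (Fubini over tables and directions, and that $f$ is an absolute cohomology class) are sensible and consistent with the paper.
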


\subsubsection{Obstacles with many right angles.} Consider now a more general
periodic windtree table with $n$ obstacles which are horizontal polygons with
right angles. For each obstacle $i$ there are $k_i$ inward (concave) and $4 +
k_i$ outward (concave) right angles. Which implies that the obstacle has
$2+k_i$ vertical and $2+k_i$ horizontal sides.
We denote this family by $\mathcal B_n(k_1, \dots, k_n)$.\\

The associated quadratic differential has simple zeros at the outward
right angles and poles at the inward. It has genus $n+1$ and is
in the stratum $$\quadra(1^{4n + p}, -1^{p})$$ where $p = \sum k_i$.\\

To construct a basis of homology of the associated translation surface, we
start from the left point of the lowest horizontal side and browse the obstacle
boundary clockwise until we come back to the starting point. This yields saddle
connections $\alpha^1_i, \beta^1_i, \alpha^2_i, \dots, \alpha^{4+k_i}_i,
\beta^{4+k_i}_i$. The classes $\alpha^{4+k_n}_n$ and $\beta^{4+k_n}_n$ are not
taken into consideration to yield a basis of $H^-_1(S, \Sigma; \C)$. Let
$\gamma_i$ be the path joining the starting points two consecutive obstacles
$i$ and $i+1$ and  define as in the previous section absolute homology classes
$a, b$ and $d$.\\

The equations in period coordinates are very similar as in the previous case,
we only need to adapt equations on the obstacles.\\
\begin{align*}
	\begin{split}
		& \im(a_1) = 0 \text{, } \re(b_1) = 0 \text{, } \re(a_1) = \im(b_1)\\
		&\re(\alpha^j_i) = 0 \text{ for all } 1 \leq i \leq n \text{ and } 1 \leq j \leq 2+k_i-1\\
		&\im(\beta^j_i) = 0 \text{ for all } 1 \leq i \leq n \text{ and } 1 \leq j \leq 2+k_i-1\\
		& d_i = \gamma_i - \overline \gamma_i \text{ for all } 1 \leq i
		\leq n-1
	\end{split}\\
	\nonumber \vspace{1cm}\\
	\begin{split}
		& a_1 = a_2 \text{, } b_1 = -b_2\\
		&\sum_{j=1}^{4+k_i} \alpha^j_i = 0 \text{ for all } 1 \leq i \leq n-1\\
		&\sum_{j=1}^{4+k_i} \beta^j_i = 0 \text{ for all } 1 \leq i \leq n-1\\
	\end{split}\\
	\nonumber \vspace{1cm}
\end{align*}
There are now $\sum (4 + 2k_i-2) + 3 = 2n + 2p + 3$ real equations and $3n-1$
complex equations.  The quadratic stratum is of complex dimension $$2(n+1) + 4n
+ 2p -2 = 6n + 2p$$ thus the induced subspace is of real dimension $$12n + 4p -
2n - 2p -3 - 6n + 2 = 4n + 2p - 1.$$ 
On the other hand for the family of billiards, we have $\sum (4+2k_i -2) = 2n +
2p$ variables for the size of each obstacle, $2n-2$ for relative position of
the obstacles, and $1$ dimensions for the size of the square torus. Thus we
have indeed listed all the equations that define our billiard family.\\

The first part of the previous argument applies \textit{verbatim} to this case
with the real and imaginary part equations. For the second part we need to
exhibit a similar construction of cylinders. The construction of Figure
\ref{cylinder2} is straightforward to generalise to any shape of
obstacle. We will detail the generalisation of the construction in Figure
\ref{cylinder}. 

Start with the vertical side that does not appear in the
basis. Now we can find an element of the family such that the obstacle $n$ is
in the neighborhood of a rectangle as in Figure \ref{deformation}, making every
other side very small, and similarly for the first obstacle. There is a
horizontal  cylinder joining the given side of obstacle $n$ with a side of the
first obstacle. This surface will be completely decomposed into horizontal
cylinders and the lengths are chosen to be all rationally independent.

\begin{figure}[h]
	\centering
	\begin{subfigure}{.45\textwidth}
		\centering
		\includegraphics[scale=.4]{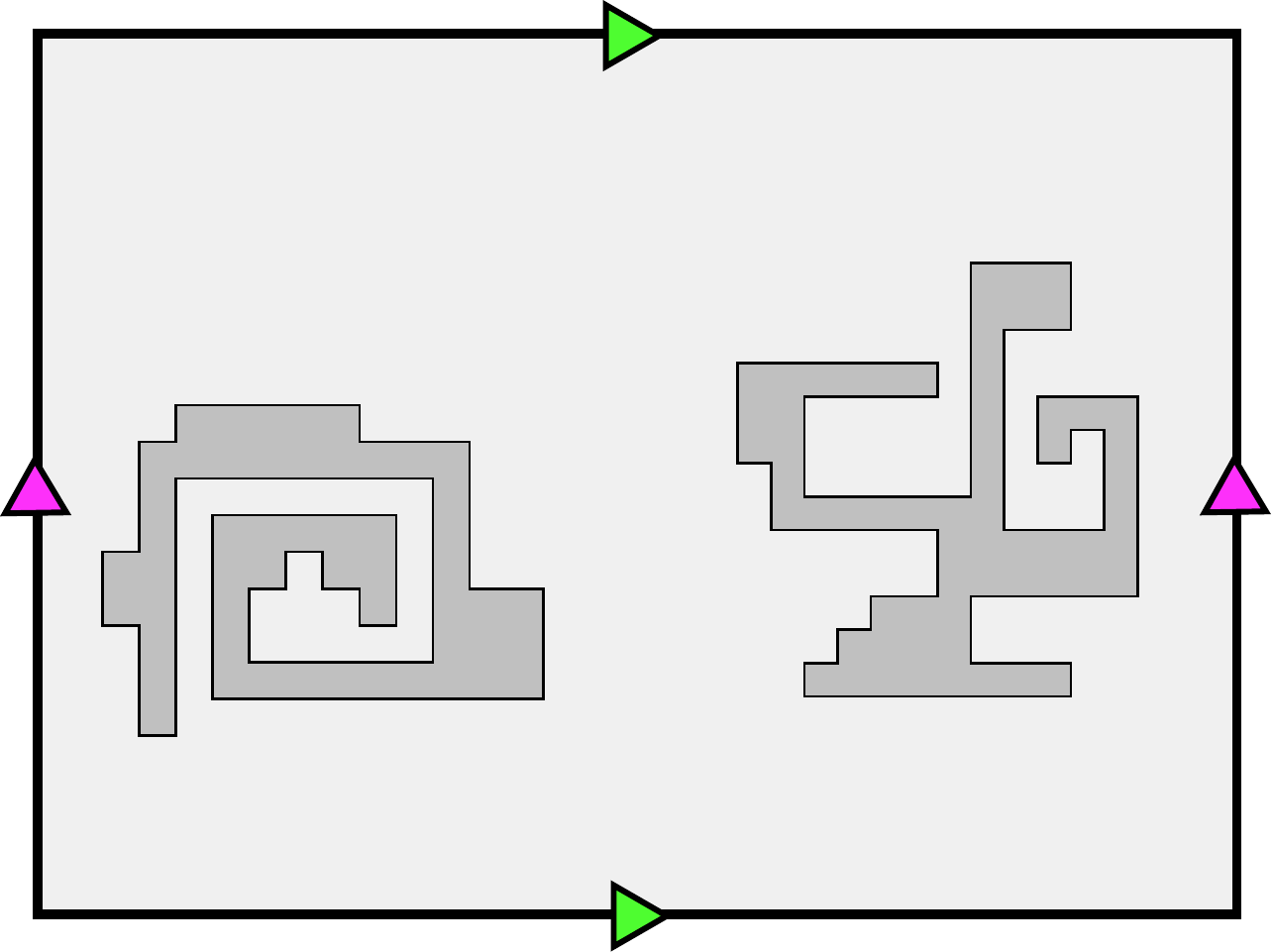} 
	\end{subfigure}
	\begin{subfigure}{.45\textwidth}
		\centering
		\includegraphics[scale=.4]{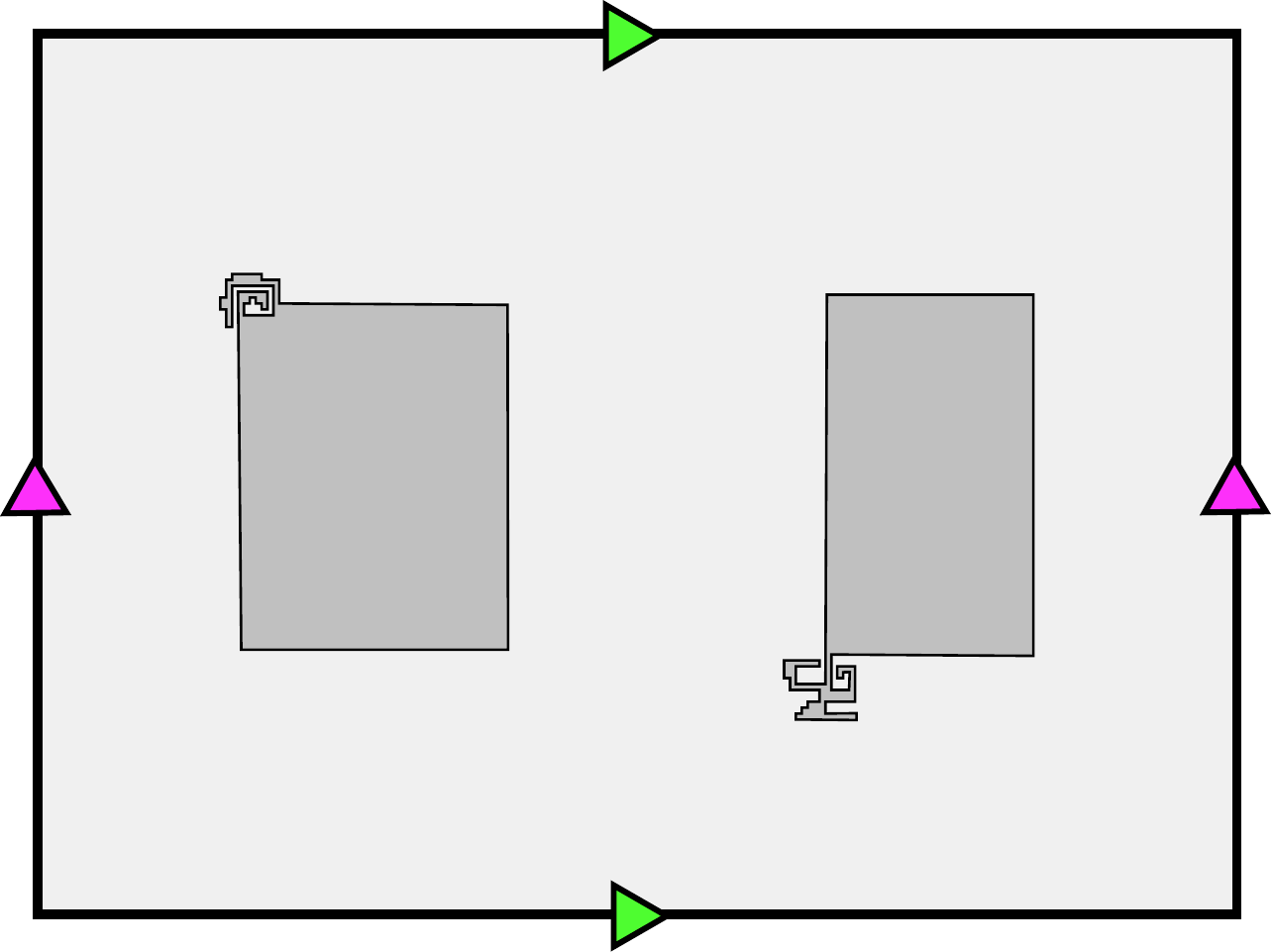} 
	\end{subfigure}
	\caption{Example of deformation}
	\label{deformation}
\end{figure}

This enables us to break the equation constraining the $\alpha_1^j$.
Then by induction we show that a generic billiard in
$\mathcal B_n(k_1, \dots, k_n)$ induces a quadratic
differentials with dense orbits in the stratum. We have the
following theorem,

\begin{theorem}
	For any $n \geq 2$,  $k_1, \dots k_n \geq 0$ and $p = \sum k_i$,
	the diffusion rate for in Lebesgue-almost every windtree model
	$B_n(k_1, \dots, k_n)$ in Lebesgue-almost every direction is equal to
	the top Lyapunov exponent of $\quadra(1^{4n + p}, -1^p)$.
\end{theorem}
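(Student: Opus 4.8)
The plan is to combine the two main tools already assembled in the paper: the orbit-closure machinery of Section 4 (Lemmas \ref{L}, \ref{real}, \ref{cylinder_deformation}) to show that a Lebesgue-generic windtree table in $\mathcal B_n(k_1,\dots,k_n)$ gives rise to a half-translation surface whose $\GL(2,\R)$-orbit is dense in $\quadra(1^{4n+p},-1^p)$, and then Corollary \ref{diff_rate} to identify the diffusion rate of such a table with the top Lyapunov exponent of that stratum. The first half is the substantive part and proceeds exactly parallel to the rectangular case $\mathcal B_n$. I would first record the topological data: the associated surface has genus $n+1$, the stratum is $\quadra(1^{4n+p},-1^p)$, and write down the explicit linear equations in period coordinates (in the hat basis provided by Proposition \ref{hatbasis}, relative to the chosen fundamental domain for the holonomy action) that cut out the image of $\mathcal B_n(k_1,\dots,k_n)$. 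As in the text, I would then carry out the dimension count to confirm that the listed real and complex equations are all of them --- the count being $\sum(4+2k_i-2)+3 = 2n+2p+3$ real and $3n-1$ complex equations against a stratum of complex dimension $6n+2p$, matching the $2n+2p$ (obstacle sizes) $+\,(2n-2)$ (relative positions) $+\,1$ (torus size) billiard parameters.

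Next I would split the equations into the two groups as before: the ``real/imaginary part'' equations (fixing directions of sides, and $d_i=\gamma_i-\overline{\gamma_i}$) and the ``gluing/symmetry'' equations ($a_1=a_2$, $b_1=-b_2$, $\sum_j\alpha^j_i=0$, $\sum_j\beta^j_i=0$). For the first group, Lemma \ref{real} applies verbatim: since none of the periods occurring in the real-part equations are constrained by the symmetry equations, the affine subspace $L$ giving the generic orbit closure contains the complex span of those periods, hence satisfies none of the first-group equations. So $L$ contains the locus cut out by the second group alone. For the second group I would run the induction of Lemma \ref{cylinder_deformation}, exhibiting at each stage a surface in the current locus with a cylinder whose core curve is rationally independent of all parallel cylinders and crosses exactly the periods appearing in a single remaining equation. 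The deformation of Figure \ref{cylinder2} handles $a_1=a_2$ and $b_1=-b_2$ (it generalises to arbitrary right-angled obstacles with no change since it only uses obstruction of the torus flow by moving the obstacles independently in the two copies, using that $\gamma_i,d_i$ are already decorrelated). For the obstacle equations $\sum_j\alpha^j_i=0$, the generalisation of Figure \ref{cylinder}/\ref{deformation} is the key geometric input: one degenerates obstacle $n$ and obstacle $1$ towards rectangles with all other sides very short, producing a completely-decomposed horizontal cylinder structure with rationally independent moduli and a cylinder crossing the side of obstacle $n$ that was omitted from the basis together with a side of obstacle $1$; this breaks the equation on the $\alpha^j_1$, and then induction on $i$ (using the previously-freed side of obstacle $i$ as the ``free'' end) breaks the remaining $\alpha$-equations, with the vertical direction handling the $\beta$-equations symmetrically.

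Once density is established, the conclusion is immediate: by Lemma \ref{L} (applied to $\mathcal B = \mathcal S(\mathcal B_n(k_1,\dots,k_n))$) a full-measure set of tables have orbit closure equal to the whole stratum, and Corollary \ref{diff_rate} then gives, for each such table and Lebesgue-almost every direction, that the diffusion rate equals $\lambda_1$, the top Lyapunov exponent of $\quadra(1^{4n+p},-1^p)$ --- noting that the stratum has $4n+2p>3$ singularities not all of even order (the simple zeros are odd), so the irreducibility lemma feeding Corollary \ref{diff_rate} does apply. The main obstacle I anticipate is the inductive cylinder construction for general right-angled obstacles: one must verify that, after the degeneration, the surface really does decompose entirely into horizontal cylinders, that the modulus of the relevant cylinder can be made rationally independent of the others, and --- most delicately --- that its core curve has intersection number exactly one with each of the periods in the target equation and zero with the periods already freed, so that Lemma \ref{cylinder_deformation} shifts precisely the intended coordinates. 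The remaining steps (dimension count, the Lemma \ref{real} argument, the final assembly) are routine given the rectangular case.
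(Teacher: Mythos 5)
Your proposal follows the paper's own proof essentially step for step: the same period-coordinate equations and dimension count, the same split into real-part equations (handled by Lemma \ref{real}) and symmetry equations (handled by the inductive cylinder deformations generalising Figures \ref{cylinder2} and \ref{deformation}), and the same final assembly via Lemma \ref{L} and Corollary \ref{diff_rate}. The approach is correct and matches the paper's argument, including your identification of the delicate point --- verifying the complete horizontal cylinder decomposition and the intersection numbers after degenerating the obstacles.
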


\section{Some numerical computations}

Figure \ref{no_pole} shows numerical approximations of the principal Lyapunov
exponent of strata $\quadra (1^{4n})$. We observe that it goes to $1/2$ when $n
\to \infty$.

\begin{figure}[h!]
  \centering
  \includegraphics[width=8cm]{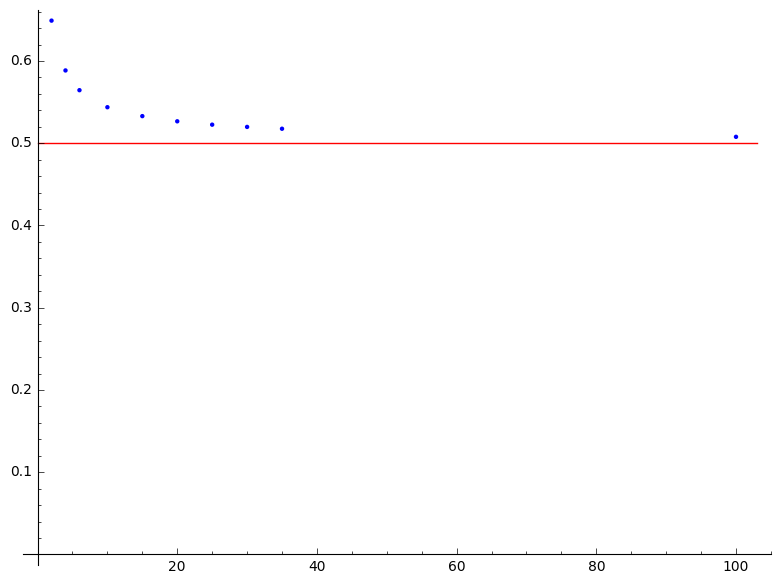}
  \caption{Principal Lyapunov exponent for $\quadra(1^{4n})$}
  \label{no_pole}
\end{figure}

In Figure \ref{ten_pole}, we represent a computation of the principal
Lyapunov exponent for $\quadra(1^{4n + 10}, -1^{10})$. When we
fix the number of simple poles and increase the number of simple zeros, the
diffusion rate again goes to $1/2$ but now by smaller values.\\

\begin{figure}[h!]
  \centering
  \includegraphics[width=8cm]{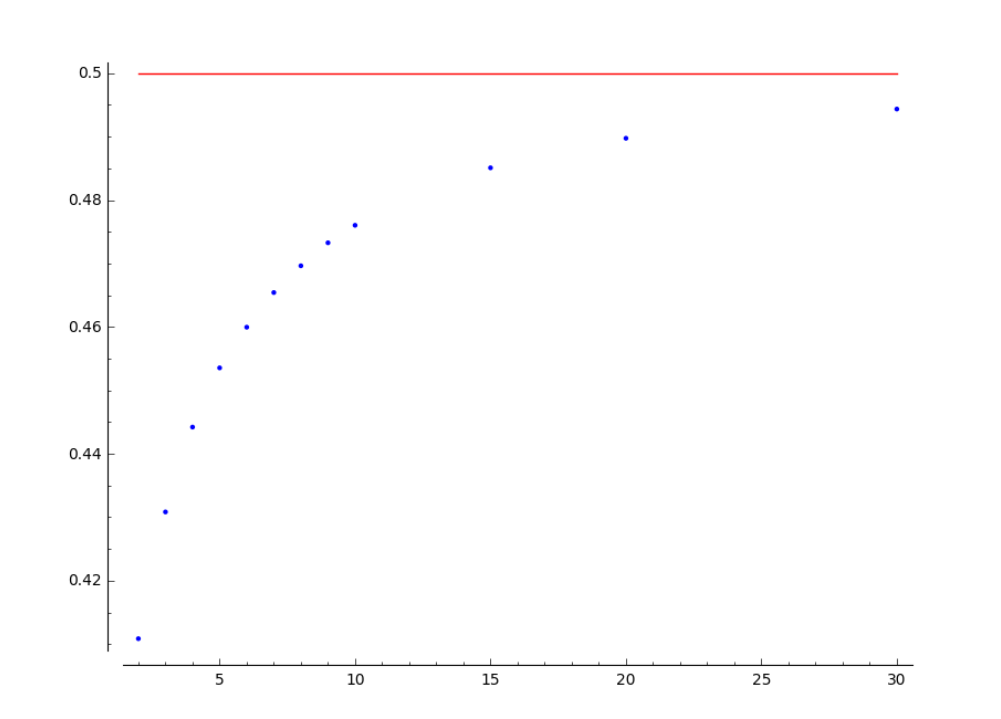}
  \caption{Principal Lyapunov exponent for
    $\quadra(1^{4n+10}, -1^{10})$}
  \label{ten_pole}
\end{figure}

The $1/2$ value is also the diffusion rate for the Brownian motion.
Intuitively, these convex angles scatter the linear flow which follows
completely different paths from one side to the other of the singularity.
They mimic the hyperbolic behaviour of smooth convex obstacles.\\

An opposite behaviour is given by the concave right angles of the
obstacles. In Figure \ref{poles}, we present the largest Lyapunov
exponent of strata corresponding to windtrees with two obstacles
with an increasing number of concave angles.

\begin{figure}[h!]
  \centering
  \includegraphics[width=8cm]{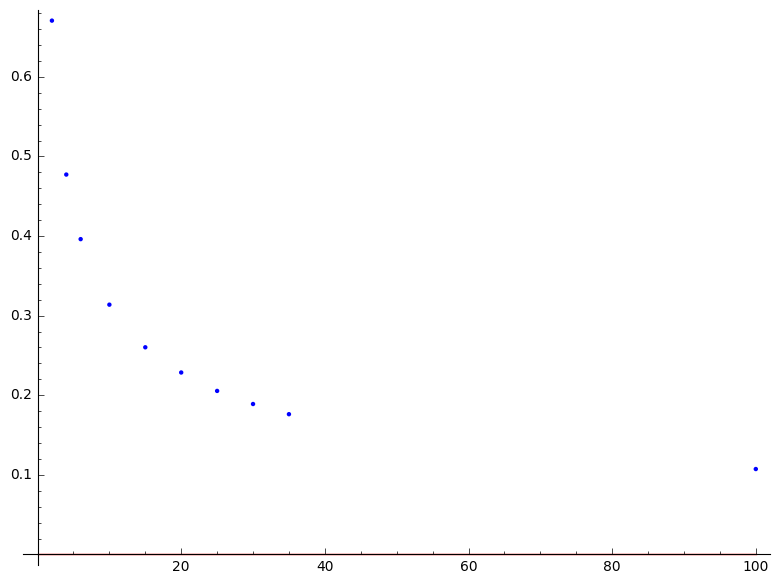}
  \caption{Principal Lyapunov exponent for $\quadra(1^{8+p}, -1^{p})$}
  \label{poles}
\end{figure}

Further experiments show that in contrary to the previous case for  a fixed
number of simple zeros and a number of simple poles going to infinity, the
principal Lyapunov exponent is going to zero. A heuristic explanation for this
phenomenon is that when the flow hits the obstacle close to a concave right
angle in the billiard it comes back on its steps slightly shifted as drawn in
Figure \ref{concave}. This enters in resonance with the result of \cite{DZ}
which states that when we increase the number of concave right angles of a
single obstacle for a periodic windtree, the diffusion rate goes to zero.  This
also enters in the frame of the more general Grivaux-Hubert conjecture that we
explore and reformulate in \cite{Fouge}. \\

\begin{figure}[h]
  \centering
  \includegraphics[width=5cm]{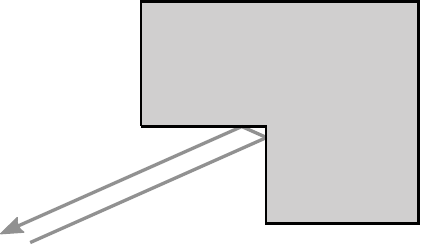}
  \caption{Flow bouncing close to a concave right angle.}
  \label{concave}
\end{figure}

\clearpage
\appendix
\section{Generic Lyapunov exponent}
In this section we follow the proof of \cite{EC} which 
shows that any translation surface is Lyapunov and Birkhoff generic in
its orbit closure for almost every direction. We will focus on one of
the key results in this article about Lyapunov genericity on a
irreducible component for the Kontsevich-Zorich cocycle.

\begin{theorem*}[1.5 in \cite{EC}]
  Fix $x \in \mathcal H_1(\alpha)$ and let
  $\mathcal M = \overline{\SL(2,\R) x}$ the smallest affine invariant
  manifold containing $x$, let $V$ be a $\SL(2,\R)$ invariant
  subbundle of the Hodge bundle which is defined and continuous on
  $\mathcal M$. Let $A_V : \SL(2,\R) \times \mathcal M \rightarrow V$
  denote the restriction of the Kontsevich-Zorich cocycle to $V$ and
  suppose that $A_V$ is strongly irreducible with respect to the
  affine measure $\nu_{\mathcal M}$ whose support is $\mathcal M$.
  Then, for almost every $\theta \in [0, 2\pi)$,
  $$ \lim_{t \to \infty} \frac{\log || A_V(g_t r_\theta x)||}{\log t} \to \lambda_1 $$
  where $\lambda_1$ is the top Lyapunov exponent of $A_V$.
\end{theorem*}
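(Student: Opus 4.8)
The plan is to reproduce the argument of \cite{EC}, which splits into two essentially independent components: an \emph{equidistribution} part, showing that for Lebesgue-almost every direction $r_\theta x$ the $g_t$-forward orbit is Birkhoff generic for $\nu_{\mathcal M}$, and an \emph{irreducibility} part, which upgrades Birkhoff genericity to Lyapunov genericity along the subbundle $V$.

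First I would establish the equidistribution step. The main input is the circle-equidistribution theorem of \cite{EMM}: for \emph{every} $x$ with $\overline{\SL(2,\R)\cdot x}=\mathcal M$ and every $f\in C_c(\mathcal M)$, the circular averages $\tfrac1{2\pi}\int_0^{2\pi} f(g_t r_\theta x)\,d\theta$ converge to $\int_{\mathcal M} f\,d\nu_{\mathcal M}$ as $t\to\infty$, combined with the non-divergence estimates of Eskin--Masur and Athreya ruling out escape of mass along these orbits. From these one bootstraps, through a maximal inequality and a Borel--Cantelli argument along a geometric sequence of times, to the statement that for Lebesgue-almost every $\theta$ the orbit $(g_t r_\theta x)_{t\ge 0}$ equidistributes toward $\nu_{\mathcal M}$. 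I expect this to be the main obstacle: here $x$ is a \emph{fixed} surface, not a $\nu_{\mathcal M}$-typical one, so no Birkhoff-type theorem applies directly, and one must control the excursions of the orbit into the cusp uniformly and feed a second-moment estimate into the maximal function argument --- this is exactly the technical core of \cite{EC}.

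With equidistribution in hand, the upper bound is routine. The function $\log^+\|A_V^{\pm1}\|$ is $\nu_{\mathcal M}$-integrable --- it has at worst logarithmic singularities near the boundary of $\mathcal M$ --- so by the Oseledets theorem on $(\mathcal M, g_t,\nu_{\mathcal M})$ and Egorov's theorem there is a set of measure arbitrarily close to $1$ on which the cocycle norm over a fixed-length time block is at most $e^{(\lambda_1+\varepsilon)\cdot(\text{block length})}$. Chaining such blocks along a Birkhoff-generic orbit --- which spends a fraction of time tending to $1$ in that set --- and absorbing the remaining excursions (of vanishing density and at most logarithmic growth rate) gives $\limsup_{t\to\infty}\frac{\log\|A_V(g_t r_\theta x)\|}{\log t}\le\lambda_1$.

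Finally comes the matching lower bound, which is precisely where strong irreducibility enters. I would pass to the projectivization $\mathbb{P}(V)$ and run a Furstenberg-type argument for the $\SL(2,\R)$-cocycle $A_V$: strong irreducibility with respect to $\nu_{\mathcal M}$ forbids any $\nu_{\mathcal M}$-measurable, $A_V$-equivariant assignment of a finite union of proper nonzero subspaces, so the top exponent of $A_V$ is simple and the Oseledets filtration carries no invariant ``slow'' direction onto which a generic orbit could collapse. Quantitatively, if one had $\liminf_{t\to\infty}\frac{\log\|A_V(g_t r_\theta x)\|}{\log t}<\lambda_1$ on a positive-measure set of $\theta$, the slowly growing directions would, using the equidistribution of the first step, assemble into such a forbidden invariant subbundle; this contradiction, together with the upper bound, gives the claimed limit. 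The same projectivization argument run with a single nonzero vector $h\in V$ in place of the operator norm is what will give Lemma \ref{genericity}, since strong irreducibility forces every $h\neq 0$ to be generic for almost every $\theta$.
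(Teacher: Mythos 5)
Your two-part skeleton (Birkhoff genericity of almost every direction via the equidistribution and non-divergence results of \cite{EMM}, then an upgrade to Lyapunov genericity using strong irreducibility) is the right one, and your upper bound --- Kingman plus Egorov plus chaining fixed-length blocks along a Birkhoff-generic orbit, absorbing the low-density excursions --- is essentially sound. The gap is in your lower bound, which is precisely the step this paper's appendix reproduces in detail. Your proposed mechanism, namely that if $\liminf_{t\to\infty}\log\|A_V(g_t r_\theta x)\|/\log t<\lambda_1$ on a positive-measure set of $\theta$ then the ``slowly growing directions'' would assemble into a forbidden $A_V$-equivariant family of subspaces, does not go through: all the bad directions live over the single circle $\{r_\theta x\}_{\theta}$, which is $\nu_{\mathcal M}$-null, and Birkhoff genericity of the base orbit in $\mathcal M$ gives no information about how a fixed vector (or the slow Oseledets subspace at $r_\theta x$) sits relative to the Oseledets filtration of $\nu_{\mathcal M}$ --- that filtration is only defined at $\nu_{\mathcal M}$-typical points, and $x$ is exactly not assumed typical. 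There is no measurable equivariant object over $\mathcal M$ for strong irreducibility to contradict.

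What actually closes the argument, and what is absent from your sketch, is the random-walk approximation. One takes an $\SO(2,\R)$-bi-invariant, compactly supported, absolutely continuous measure $\mu$ on $\SL(2,\R)$ for which $\nu_{\mathcal M}$ is stationary (Furstenberg's correspondence $\nu\mapsto\rho*\nu$, recalled at the start of the appendix). Strong irreducibility then enters through the stationary-measure statement on the Grassmannian bundle (Lemma C.10 of \cite{EM}): the conditional measures $\eta_y$ give zero mass to the set $I(h)$ of subspaces containing any fixed nonzero $h$. This is what yields the block-decomposition lemma (Lemmas 2.11 and 2.16 of \cite{EC}): outside a subset of $\N$ of density $4\epsilon$, time is covered by disjoint blocks of length $L$ on which the cocycle expands \emph{every} vector at rate between $\lambda_1-\epsilon$ and $\lambda_1+\epsilon$; summing over blocks gives the lower (and upper) bound for $\mu^{\N}$-almost every random-walk trajectory. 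Finally the sublinear tracking lemma, $g_{\lambda n} r_\theta = \epsilon_n g_n\cdots g_1$ with $\frac1n\log\|\epsilon_n\|\to 0$, transfers the conclusion to Lebesgue-almost every $\theta$. Without this bridge between the deterministic flow $g_t r_\theta$ and a stationary process, a ``Furstenberg-type argument'' has nothing to act on; you need either to reproduce this chain or to supply a genuine substitute for the tracking step.
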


\noindent Our purpose here is to show the following additional lemma
to this theorem, introduced as Lemma \ref{genericity} in section \ref{Lyapunov}.

\begin{lemma*}
  In the previous theorem, for any $h \in V$ and almost every
  $\theta \in [0, 2\pi)$
  $$ \lim_{t \to \infty} \frac{\log || A_V(g_t r_\theta x) h||}{\log t} \to \lambda_1.$$
\end{lemma*}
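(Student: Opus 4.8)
The plan is to deduce the lemma from the fact, established in \cite{EC}, that a.e. ray is both Birkhoff and Lyapunov generic for its orbit closure. Throughout, $\|\cdot\|$ is the Hodge norm on $V$; we may assume $h \neq 0$ and, after rescaling, $\|h\| = 1$. From $\|A_V(g_t r_\theta x) h\| \leq \|A_V(g_t r_\theta x)\|$ and the theorem, for a.e. $\theta$
$$ \limsup_{t \to \infty} \frac{\log \|A_V(g_t r_\theta x) h\|}{\log t} \leq \lambda_1 , $$
so the whole content is the reverse inequality --- indeed, producing a genuine limit equal to $\lambda_1$.

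First I would extract from the proof of \cite{EC} slightly more than the statement of the theorem above. Their argument shows that for a.e. $\theta$ the ray $(g_t r_\theta x)_{t \geq 0}$ equidistributes in $(\mathcal M, \nu_{\mathcal M})$ and that the cocycle along this ray is Oseledets regular with the expected Lyapunov spectrum (in the normalisation of the theorem); strong irreducibility of $A_V$ is used there precisely to keep the cocycle from being confined to a proper subbundle along the ray. In particular, for a.e. $\theta$ there is a one-sided Oseledets filtration $V = V_1(\theta) \supsetneq V_2(\theta) \supsetneq \dots$, with $V_i(\theta) = \{ v \in V : \limsup_{t \to \infty} (\log t)^{-1} \log \|A_V(g_t r_\theta x) v\| \leq \lambda_i \}$, and by Oseledets' theorem \emph{every} fixed vector $v \notin V_2(\theta)$ satisfies $\lim_{t \to \infty} (\log t)^{-1} \log \|A_V(g_t r_\theta x) v\| = \lambda_1$, a genuine limit. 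The operator norm enters the proof in \cite{EC} only in order to identify $\lambda_1$ at the end; replacing it by $\|A_V(g_t r_\theta x) h\|$ for the prescribed $h$ costs nothing as soon as $h \notin V_2(\theta)$. So the lemma reduces to: $h \notin V_2(\theta)$ for a.e. $\theta$.

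This is where strong irreducibility is used a second time. The forward-slow subspace $V_2(\theta)$ is a \emph{proper} subspace of $V$ for a.e. $\theta$ --- proper because, by the theorem, the operator norm does grow at rate $\lambda_1$ --- and $\theta \mapsto V_2(\theta)$ is a measurable family of subspaces of a fixed dimension. I must show that its distribution on the relevant Grassmannian assigns zero mass to the Schubert locus $\{ W : h \in W \}$. Were this to fail, the positive-measure set $\{ \theta : h \in V_2(\theta) \}$, combined with the equivariance $V_2(g_s r_\theta x) = A_V(g_s r_\theta x)\, V_2(\theta)$ along rays and the equidistribution of a.e. ray, would manufacture an $\SL(2,\R)$-equivariant measurable choice of proper subspaces over $\mathcal M$ that is supported, through $h$, on a proper subvariety of each fibre; this contradicts the absence of proper $\SL(2,\R)$-invariant subbundles of $V$, i.e. strong irreducibility --- which for the quadratic strata at hand follows from \cite{EFW} together with the Zariski density of the monodromy recorded in \cite{Rodolfo}. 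Granting this, $h \notin V_2(\theta)$ for a.e. $\theta$, and the preceding paragraph concludes.

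The main obstacle is the interface with the borrowed argument. On one side, one must locate in \cite{EC} the single point at which the operator norm (rather than a single vector) is used and check that the strong-irreducibility input there is insensitive to replacing it with $\|A_V(\cdot)\, h\|$ --- this is the ``little modification''. On the other side, one must make rigorous the last paragraph's passage from ``$\{ \theta : h \in V_2(\theta) \}$ has positive measure'' to a contradiction: namely, that strong irreducibility with respect to $\nu_{\mathcal M}$ actually controls the position of a \emph{fixed} vector relative to the forward-slow subspaces of individual geodesic rays, and not merely the flow-invariant structure. The non-escape of mass along the rays, already dealt with in \cite{EC}, is what legitimises the equidistribution invoked above.
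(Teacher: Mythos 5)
Your upper bound is immediate and correct, and you have correctly located the crux of the lemma: one must show that the fixed vector $h$ avoids the forward-slow subspace $V_2(\theta)$ for almost every $\theta$. But your mechanism for that step has a genuine gap. The condition ``$h \in W$'' is \emph{not} equivariant: under the flow the pair $(r_\theta x, V_2(\theta))$ moves to $(g_t r_\theta x, A_V(g_t r_\theta) V_2(\theta))$, and the latter subspace contains $A_V(g_t r_\theta)h$, not $h$. So a positive-measure set $\{\theta : h \in V_2(\theta)\}$ does not ``manufacture an $\SL(2,\R)$-equivariant choice of proper subspaces passing through $h$''; the Schubert locus $I(h)$ is not preserved, and no invariant subbundle over $(\mathcal M, \nu_{\mathcal M})$ is produced. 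There is a second, related problem: the circle $\{r_\theta x\}$ is $\nu_{\mathcal M}$-null, and Birkhoff equidistribution of individual rays does not transport a measurable family of subspaces defined over that circle to a measurable section over $\mathcal M$ against which strong irreducibility (a statement about the cocycle relative to $\nu_{\mathcal M}$) could be tested. The correct way to make your idea rigorous is Furstenberg's stationary-measure argument on the Grassmannian bundle --- this is exactly Lemma C.10 of \cite{EM}, quoted in the appendix: for the $\mu$-stationary measure $\tilde\nu$ with conditionals $\eta_x$, one has $\eta_x(I(h_x)) = 0$ for a.e.\ $x$ --- and that statement lives in the random-walk category, not along geodesic rays.

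This is why the paper takes a different, and in the end unavoidable, route: it first proves the random-walk version (Theorem \ref{thm:random}), using the block-decomposition lemma of \cite{EC} (Lemmas 2.11 and 2.16 there), which already controls the growth of an \emph{individual} vector --- not merely the operator norm --- over all but a density-$4\epsilon$ set of time blocks; the ``little modification'' is just to telescope $\log\|v_n\|$ over the good blocks and bound the contribution of the bad set and the boundary terms. It then transfers from the random walk to $g_t r_\theta$ via the sublinear tracking lemma of \cite{EC} ($g_{\lambda n} r_\theta = \epsilon_n g_n \cdots g_1$ with $\frac{1}{n}\log\|\epsilon_n\| \to 0$) together with the polynomial bound $\|A_V(g,x)\| \leq C\|g\|^N$. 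Your plan, to be completed, would have to reconstruct precisely this machinery at its critical step; as written, the passage from positive measure of bad directions to a contradiction with strong irreducibility does not go through.
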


In \cite{EC} intuition of the result is provided by analogy with
random walks. We start by showing the analog of Lemma \ref{genericity}
for random walks.

\subsection{Random walks}

Let $\mu$ be a $\SO(2,\R)$-invariant compactly supported measure on $\SL(2,\R)$
which is absolutely continuous with respect to Haar measure. A measure $\nu$ on
$\mathcal H_1(\alpha)$ is called $\mu$-stationary if $$\mu * \nu = 
\int_{\SL(2,\R)} (g_* \nu) d\mu(g) = \nu.$$ By a theorem of Furstenberg
\cite{FU1}, \cite{FU2}, restated in \cite{NZ}[Theorem 1.4], there exists a
probability measure $\rho$ on $\SL(2,\R)$ such that the map $\nu \to \rho *
\nu$ is a bijection between ergodic measures for the action of upper triangular
subgroup of $\SL(2,\R)$ and ergodic $\mu$ stationary measures which are
$\SL(2,\R)$-invariant affine measures according to \cite{EM}[Theorem 1.4].\\
This is a first step for an analogy between Teichmüller flow in some affine
invariant locus and a random walk with the associated measure.\\

Let $\Gr_s$ denote the grassmanian of $s$-dimensional subspaces in the
$\SL(2,\R)$ invariant subbundle of the Hodge bundle $V$. Let $\tilde {\mathcal
H} = \mathcal H_1(\alpha) \times \Gr_s$ and $\tilde \nu$ be the $\mu$
stationary measure on it; we may write $d \tilde \nu (x, U) = d \nu(x) d
\eta_x(U)$.

The measure $\eta_x$ on $\Gr_s$ heuristically corresponds to the mean position
of any linear subspace carried along the Teichmüller flow using Gauss-Manin
connection.  Let $h$ be some vector in $V \setminus 0$ and $I(h) \subset
\Gr_s$ be the set of $s$-dimensional subspaces containing $h$.

\begin{lemma*}[C.10 in \cite{EM}]
	If the cocycle $A_V$ is strongly irreducible on $V$ then for almost
	every $x \in \mathcal H_1(\alpha)$ and any vector $h_x \in V$,
	$\delta_x(I(h_x)) = 0$
\end{lemma*}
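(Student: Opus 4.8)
The plan is to run Furstenberg's non-atomicity argument in its fibered (cocycle) form, which is exactly Lemma~C.10 of \cite{EM}; here is the structure. I write $\eta_x$ for the disintegration measures on $\Gr_s$ of the stationary measure $\tilde\nu$ on $\mathcal H_1(\alpha)\times\Gr_s$ (the object called $\delta_x$ in the statement), and for a subspace $W\subseteq V$ I set $\mathbf I(W)=\{U\in\Gr_s:W\subseteq U\}$, so that $I(h)=\mathbf I(\langle h\rangle)$. These incidence loci are closed and vary continuously with $W$, and they satisfy $\mathbf I(W_1)\cap\mathbf I(W_2)=\mathbf I(W_1+W_2)$, $A_V(g,x)\cdot\mathbf I(W)=\mathbf I\big(A_V(g,x)W\big)$, and $\mathbf I(W)\neq\Gr_s$ whenever $0\neq W$ and $\dim W\le s$. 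I may assume $1\le s\le\dim V-1$ (hence $\dim V\ge2$), since otherwise $\Gr_s$ is a point, $I(h)=\Gr_s$, and both this lemma and the genericity statement it feeds are trivial or vacuous.

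First I would reduce to a scalar statement on the base. For $1\le r\le s$ put
\[
a_r(x)=\sup\{\eta_x(\mathbf I(W)):W\subseteq V,\ \dim W=r\}.
\]
Since $W\mapsto\mathbf I(W)$ is a continuous family of closed subsets of the compact $\Gr_r(V)$, the function $W\mapsto\eta_x(\mathbf I(W))$ is upper semicontinuous, so this supremum is attained and $a_r$ is measurable. It suffices to show $a_1\equiv0$ $\nu$-a.e., for then $\eta_x(I(h_x))\le a_1(x)=0$ for $\nu$-a.e.\ $x$ and \emph{every} $h_x\in V$. Next I would use the fibre-wise form of $\mu*\tilde\nu=\tilde\nu$, namely $\eta_x=\int\big(A_V(g,x)^{-1}\big)_*\,\eta_{gx}\,d\mu(g)$ ($\nu$-a.e.): together with the transformation rule for $\mathbf I$ this gives $a_r(x)\le\int a_r(gx)\,d\mu(g)$, and integrating against the $\mu$-stationary $\nu$ forces equality $\nu$-a.e., so each $a_r$ is a bounded $\mu$-harmonic function. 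Because $\mu$ is absolutely continuous and $\nu$ is ergodic (being affine $\SL(2,\R)$-invariant, or after passing to an ergodic component), a bounded $\mu$-harmonic function is $\nu$-a.e.\ constant, so $a_r\equiv a_r^*$ with $1\ge a_1^*\ge\cdots\ge a_s^*\ge0$. Suppose, for contradiction, that $a_1^*>0$, and let $r_0=\max\{1\le r\le s:a_r^*>0\}$.

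Then I would extract a forbidden invariant object. For generic $x$ let $\mathcal F(x)=\{W\in\Gr_{r_0}(V):\eta_x(\mathbf I(W))=a_{r_0}^*\}$, which is nonempty by the attainment above. It is \emph{finite}: if $W_1\neq W_2$ lie in $\mathcal F(x)$ then $\dim(W_1+W_2)\ge r_0+1$, so $\mathbf I(W_1)\cap\mathbf I(W_2)=\mathbf I(W_1+W_2)\subseteq\mathbf I(W'')$ for some $(r_0+1)$-dimensional $W''$, whence $\eta_x(\mathbf I(W_1)\cap\mathbf I(W_2))\le a_{r_0+1}^*=0$ by maximality of $r_0$ (the intersection being empty when $r_0=s$); since each element of $\mathcal F(x)$ has $\eta_x$-mass $a_{r_0}^*>0$ and total mass is $\le1$ we get $\#\mathcal F(x)\le 1/a_{r_0}^*$, and $\#\mathcal F$ is $\nu$-a.e.\ constant by the same harmonicity argument. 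Running the stationarity identity once more,
\[
a_{r_0}^*=\eta_x(\mathbf I(W))=\int\eta_{gx}\big(\mathbf I(A_V(g,x)W)\big)\,d\mu(g)\le a_{r_0}^*\qquad(W\in\mathcal F(x)),
\]
so equality forces $A_V(g,x)W\in\mathcal F(gx)$ for $\mu$-a.e.\ $g$; invertibility of $A_V(g,x)$ and constancy of $\#\mathcal F$ then give $A_V(g,x)\cdot\mathcal F(x)=\mathcal F(gx)$. Hence $x\mapsto\bigcup_{W\in\mathcal F(x)}W$ is a measurable, $A_V$-equivariant family of finite unions of proper subspaces of $V$ (proper since $r_0\le s\le\dim V-1$), contradicting the strong irreducibility of $A_V$. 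Therefore $a_1^*=0$, which is the claim.

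I expect the two genuine obstacles to be the ``soft'' inputs: justifying the disintegrated martingale identity $\eta_x=\int(A_V(g,x)^{-1})_*\eta_{gx}\,d\mu(g)$ and the step ``bounded $\mu$-harmonic $\Rightarrow$ $\nu$-a.e.\ constant'' (this is exactly where absolute continuity of $\mu$ and ergodicity of $\nu$ are essential), together with the measurable selection producing $x\mapsto\mathcal F(x)$, which is what makes the equivariance and the final appeal to strong irreducibility legitimate rather than merely fibre-wise. Everything else — upper semicontinuity of $W\mapsto\eta_x(\mathbf I(W))$, the incidence identities, and the cocycle transformation rule — is elementary; one could also repackage the induction on $r$ via the Plücker embedding $\Gr_s\hookrightarrow\mathbb P(\Lambda^sV)$, where the $\mathbf I(W)$ become honest projective subspaces and the argument becomes the familiar ``minimal invariant subspace'' device for stationary measures on projective space.
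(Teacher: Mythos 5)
The paper does not actually prove this statement: it is imported verbatim as Lemma~C.10 of \cite{EM}, so there is no internal proof to compare against. What you have written is a complete rendition of Furstenberg's non-atomicity argument in its fibered form, and it is essentially the argument that underlies the cited lemma, so your proof is correct in structure and matches the intended source rather than diverging from it. The two genuinely load-bearing reductions are both present and correct: (i) passing from ``for each $h$, for a.e.\ $x$'' to ``for a.e.\ $x$, for \emph{all} $h_x$'' via the supremum $a_1(x)$ (the naive Fubini argument would not give the quantifier order the lemma asserts), and (ii) the extraction of a measurable, cocycle-equivariant finite family of proper subspaces from the assumption $a_1^*>0$, which is exactly what the strong irreducibility hypothesis of \cite{EC}/\cite{EM} (no measurable equivariant finite union of proper subspaces) forbids. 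Your finiteness bound $\#\mathcal F(x)\le 1/a_{r_0}^*$ and the maximality-of-$r_0$ device are the standard way to make the invariant object finite. Two small points to tighten rather than gaps: the disintegrated stationarity identity naturally comes out in the ``forward'' form $\eta_y=\int \bigl(A_V(g,g^{-1}y)\bigr)_*\eta_{g^{-1}y}\,d\mu(g)$; your ``backward'' form is equivalent here because the $\SO(2,\R)$-bi-invariant absolutely continuous $\mu$ is symmetric, but one of the two should be derived honestly from $\mu*\tilde\nu=\tilde\nu$ together with $g_*\nu=\nu$. And the step ``bounded subharmonic $\Rightarrow$ harmonic $\Rightarrow$ constant'' is most cleanly closed by the Jensen/equality trick ($\mu*(a_r^2)\ge(\mu*a_r)^2$, integrate against the stationary $\nu$ to force $a_r(gx)=a_r(x)$ a.e., then invoke ergodicity of the affine measure $\nu_{\mathcal M}$ under the group generated by $\operatorname{supp}\mu$). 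These, together with the measurable-selection issue you already flag for $x\mapsto\mathcal F(x)$, are exactly the points discharged in Appendix~C of \cite{EM}; nothing in your outline would fail.
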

In particular if we consider some Oseledets flag this Lemma
yields that generically they do not contain a fixed vector $h$ along
random walks.\\

We show a random walk version of the theorem in the previous paragraph,
\begin{theorem}[Theorem 2.6 and Lemma 2.9 of \cite{EC}]
	Fix $x \in \mathcal H_1(\alpha)$ and let $\mathcal M =
	\overline{\SL(2,\R) \cdot x}$ the smallest affine invariant manifold
	containing $x$, let $V$ be a $\SL(2,\R)$ invariant subbundle of the
	Hodge bundle which is defined and continuous on $\mathcal M$. Let $A_V
	: \SL(2,\R) \times \mathcal M \rightarrow V$ denote the restriction of
	the Kontsevich-Zorich cocycle to $V$ and suppose that $A_V$ is strongly
	irreducible with respect to the affine measure $\nu_{\mathcal M}$ whose
	support is $\mathcal M$. Then for a fixed $h \in V$ and for
	$\mu^\N$-almost every $\overline g = (g_1, \dots, g_n, \dots)$, $$
	\lim_{n \to \infty} \frac 1 n \log || A_V(g_n \dots g_1, x) h|| \to
	\lambda_1 $$ where $\lambda_1$ is the top Lyapunov exponent of $A_V$.
	\label{thm:random}
\end{theorem}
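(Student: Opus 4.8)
The plan is to reduce Theorem \ref{thm:random} to the strongly irreducible case of the Furstenberg theory of products of random matrices, exactly as in the norm statement (Theorem 1.5 of \cite{EC}), but keeping track of the orbit of a single vector $h$ rather than the operator norm. First I would recall the general principle: for a strongly irreducible and contracting cocycle over an ergodic base, the top Lyapunov exponent is achieved by \emph{every} nonzero vector for almost every trajectory; the only way a vector can fail to grow at rate $\lambda_1$ is if it asymptotically lies in the ``slow'' Oseledets subspace, and the measure of the set of flags through a fixed $h$ is zero by the Lemma C.10 of \cite{EM} quoted above.

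The key steps, in order. (1) Upgrade the setup to the random walk: $\mu^{\N}$ on $\SL(2,\R)^{\N}$ pushes forward the stationary measure $\nu_{\mathcal M}$ on $\mathcal M$, and the cocycle $A_V$ over $(\mathcal M, \nu_{\mathcal M})$ together with $\mu^{\N}$ is strongly irreducible by hypothesis. (2) By Oseledets for the random walk, there is a measurable flag $V = W_1(x,\overline g) \supset W_2(x,\overline g) \supset \dots$ with exponents $\lambda_1 > \lambda_2 > \dots$, and for $h \notin W_2(x,\overline g)$ one has $\frac1n \log \|A_V(g_n\cdots g_1,x)h\| \to \lambda_1$. (3) Fix $h \in V \setminus 0$; the content of the lemma is that for $\nu_{\mathcal M}$-almost every $x$ and $\mu^{\N}$-almost every $\overline g$, the vector $h$ avoids $W_2(x,\overline g)$. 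This is where I invoke the zero-measure statement: the distribution $\eta_x$ on the relevant grassmanian, which is the $\mu$-stationary measure governing the a.s.\ position of the fastest-growing subspace (equivalently the complement of $W_2$ in the appropriate dual formulation), gives zero mass to the set $I(h)$ of subspaces containing $h$, by Lemma C.10 of \cite{EM}. Integrating over $x$ against $\nu_{\mathcal M}$ and using Fubini gives the full-measure set of $(x,\overline g)$ on which $h \notin W_2(x,\overline g)$, hence the limit is $\lambda_1$. (4) Conclude by applying Theorem 2.6 and Lemma 2.9 of \cite{EC}, which are precisely the statements that (a) the stationary measure $\nu_{\mathcal M}$ is the one attached to $\mathcal M$ and (b) the flag $W_\bullet$ of the random walk coincides $\nu$-a.e.\ with the Oseledets flag, so that the random walk computation transfers verbatim.

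The main obstacle I anticipate is not the Furstenberg input itself but the careful identification of the ``exceptional'' subspace through which $h$ must not pass. In the norm statement one only needs that \emph{some} vector grows at rate $\lambda_1$, so any choice of flag works; here one must be sure that the subspace being avoided is exactly the span of the non-top Oseledets spaces of the \emph{forward} random walk (not the backward one, and not a coisotropic complement), because that is the subspace whose a.s.\ position is distributed according to an $\eta_x$ to which Lemma C.10 applies. Making this matching precise — i.e.\ that the set of ``bad'' $h$ for a fixed trajectory is a proper subspace whose $x$-averaged law is the stationary measure $\delta_x$ of Lemma C.10 — is the delicate point; once it is in place, the rest is a direct citation of \cite{EC} and \cite{EM}. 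I would also note in passing that the measurability of $x \mapsto W_2(x,\overline g)$ and the joint measurability needed for Fubini are routine and can be cited from \cite{EC}.

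Finally, I would remark that this random walk version is exactly the ingredient needed to run the argument of \cite{EC} converting random walk genericity into Teichm\"uller (straight-line flow) genericity for almost every direction $\theta$, which yields Lemma \ref{genericity}; that conversion — replacing $\frac1n\log\|\cdot\|$ averages by $\frac{1}{\log t}\log\|\cdot\|$ along $g_t r_\theta x$ — is identical to the passage from Theorem 2.6 to Theorem 1.5 in \cite{EC} and requires no new idea beyond the single-vector statement proved here.
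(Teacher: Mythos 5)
Your route is genuinely different from the paper's. The paper reproduces the Chaika--Eskin block argument: it quotes Lemmas 2.11 and 2.16 of \cite{EC}, which say that for a.e.\ $\overline g$ all but a density-$4\epsilon$ set of times is covered by disjoint blocks of length $L$ on which the cocycle multiplies the norm of the tracked vector by $\exp((\lambda_1\pm\epsilon)L)$, and then concludes by telescoping $\log\|v_n\|$ into the contribution of the good blocks, the bad set, and a boundary term. No Oseledets flag and no stationary measure on the Grassmannian appear explicitly. You instead propose the classical Furstenberg--Guivarc'h--Raugi line: one-sided Oseledets for the random walk, plus the claim that a fixed $h$ almost surely avoids the slow subspace $W_2(x,\overline g)$, via Lemma C.10 of \cite{EM}.

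That line can be made to work, but as written there is a genuine gap, and it sits exactly at the step you flag as ``delicate'' and then leave unresolved. Two concrete problems. First, the subspace $h$ must avoid is the slow forward Oseledets subspace $W_2(x,\overline g)$, not the fast one; your description of $\eta_x$ as governing ``the fastest-growing subspace'' points at the wrong object, since $h\notin E_1$ is both generically true and useless (it does not prevent $h$ from growing at rate $\lambda_1$), and the hedge ``complement of $W_2$ in the appropriate dual formulation'' is precisely the unproved identification. Second, the law of $(x,W_2(x,\overline g))$ is not obviously a $\mu$-stationary measure on $\mathcal H_1(\alpha)\times\Gr_s$, so Lemma C.10 does not apply to it off the shelf: stationarity means pushing forward by an increment sampled \emph{independently} of the point, whereas $W_2(x,\overline g)$ depends on $g_1$, and the equivariance one actually has, $A_V(g_1,x)W_2(x,\overline g)=W_2(g_1x,\sigma\overline g)$, couples the point to the first letter. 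The standard repairs --- passing to the two-sided extension or to the inverse/adjoint walk, where the relevant subspace is past-measurable and its law is a stationary measure of a walk that is again strongly irreducible; or bypassing Oseledets altogether via Furstenberg's formula and Breiman's law of large numbers on the projective bundle --- each require a real argument, not a citation. The block decomposition used in the paper is how \cite{EC} packages exactly this difficulty, which is why the paper's proof looks so different from yours. Your closing remark about converting the random walk statement into the $g_tr_\theta$ statement by sublinear tracking does match the paper's appendix proof of Lemma \ref{genericity}.
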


This theorem already appears in \cite{EC} as a remark to a more general
theorem. We reformulate the proof in this specific case for convenience to the reader.

\subsection{Proof Theorem \ref{thm:random}}

We fix $\mathcal M$ and $V$ as in the theorem.  Pick an arbitrary $v_0 \in V$
and let $v_i(\overline g) = A_V(g_i \dots g_1, x)v_0$.
The key tool to show this theorem is a decomposition lemma for the sequences of
cocycle in the case of strong irreducibility.

\begin{lemma*}[2.11 and 2.16 in \cite{EC}]
	For all $\epsilon > 0$, there exists an integer $L$ such that for every
	$x \in \mathcal M$ almost every $\overline g$ we have that all but a
	set of $\N$ of density $4 \epsilon$ is in disjoint blocks $[i+1, i+L]$
	so that
	$$\exp (\lambda_1 - \epsilon)^L \leq \frac {|| A_v (g_{i+L} \dots
	g_{i+1}, y) v ||}{||v||} \leq \exp (\lambda_1 + \epsilon)^L.$$
\end{lemma*}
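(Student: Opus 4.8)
The statement to prove is the last displayed block in the excerpt, namely the block decomposition lemma (Lemmas 2.11 and 2.16 in \cite{EC}): for every $\epsilon>0$ there is $L\in\N$ such that for every $x\in\mathcal M$ and $\mu^\N$-almost every $\overline g$, the complement of a density-$4\epsilon$ subset of $\N$ is partitioned into disjoint blocks $[i+1,i+L]$ on which the norm of a fixed vector $v$ grows at rate between $(\lambda_1-\epsilon)L$ and $(\lambda_1+\epsilon)L$ in the exponent.

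The plan is to combine two ingredients that are already available in the excerpt: the genericity statement that Oseledets flags do not contain a fixed vector (Lemma C.10 in \cite{EM}, quoted above), and the strong irreducibility hypothesis on $A_V$. First I would fix $\epsilon>0$ and use the multiplicative ergodic theorem for the random-walk cocycle $A_V$ on the stationary system $(\tilde{\mathcal H},\tilde\nu)$: along $\mu^\N$-a.e. $\overline g$, for $\nu$-a.e. starting point the cocycle has a well-defined Oseledets decomposition, and by Lemma C.10 the fixed vector $v_0$ has, almost surely, a nonzero component along the top Oseledets subspace (equivalently, $v_0$ projects nontrivially onto the unstable direction for the backward walk, so it is not trapped in the sum of the non-top Oseledets subspaces). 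This gives that $\frac1n\log\|A_V(g_n\cdots g_1,x)v_0\|\to\lambda_1$ for a.e.\ $\overline g$ — which is exactly the conclusion of Theorem \ref{thm:random} — but to get the uniform-in-$x$, block-structured version stated in the lemma one must upgrade pointwise convergence to a uniform quantitative statement. For that I would invoke uniformity of the cocycle data over the compact support of $\mu$ together with the Birkhoff ergodic theorem: the set of times $n$ at which $\frac1L\log\frac{\|A_V(g_{n+L}\cdots g_{n+1},y)v\|}{\|v\|}$ fails to lie in $[\lambda_1-\epsilon,\lambda_1+\epsilon]$ has, by Birkhoff applied to the shift on $(\mathcal M\times\SL(2,\R)^\N,\nu\times\mu^\N)$ and by taking $L$ large (so that the $L$-step cocycle is, with high stationary probability, within $\epsilon$ of its Lyapunov value), a stationary-measure mass that can be made $<\epsilon$; a standard Vitali-type covering argument then extracts disjoint good blocks of length $L$ whose union has density $>1-4\epsilon$, where the extra factor absorbs the boundary overlaps of the covering.

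The main obstacle I expect is precisely the passage from the almost-everywhere statement "$v_0$ is generic with respect to the Oseledets flag" to the uniform-over-$x$, every-$x$ statement in the lemma: Lemma C.10 is an almost-everywhere statement in $x$, whereas the block lemma is quantified over all $x\in\mathcal M$. The resolution — and the delicate point — is that one does not need genericity of the starting point but only of the \emph{direction realized by the block}: after the first block the new base point $g_1x$ is already equidistributed in a controlled sense, and strong irreducibility (not mere irreducibility) is exactly what guarantees that a positive-density set of blocks sees the expanding behaviour regardless of where one starts. Concretely I would run the argument conditionally: choose $L$ so large that, for the $L$-fold convolution $\mu^{*L}$, the probability that $\|A_V(h_L\cdots h_1,y)w\|/\|w\|\notin[\exp(\lambda_1-\epsilon)^L,\exp(\lambda_1+\epsilon)^L]$ is $<\epsilon$ \emph{uniformly in} $(y,w)$ — this uniformity is what strong irreducibility plus compactness of $\operatorname{supp}\mu$ buys, via the contraction properties of the associated projective/Furstenberg boundary map — and then iterate along the walk, using Borel–Cantelli and a density computation to conclude that bad blocks have density $<4\epsilon$ for a.e.\ $\overline g$. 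Once this block lemma is in hand, the deduction of Theorem \ref{thm:random}, and then of Lemma \ref{genericity} via the analogy between the random walk and the Teichmüller flow set up earlier in the appendix, follows the pattern of \cite{EC}.
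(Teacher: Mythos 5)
The paper does not actually prove this lemma: its ``proof'' is the single line ``Refer to section 2.3 of \cite{EC}'', so there is no internal argument to compare yours against, and your sketch has to be judged on its own terms against what \cite{EC} actually does. You have correctly identified the relevant circle of ideas (Lemma C.10 of \cite{EM}, strong irreducibility, a density/covering argument to extract disjoint good blocks), and the Vitali-type extraction at the end is standard. But the two steps you flag as delicate are not resolved by what you write; they are asserted in a form that either begs the question or is false as stated.

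First, the claim that for $L$ large the $\mu^{*L}$-probability of a bad block is $<\epsilon$ \emph{uniformly in} $(y,w)$, ``via the contraction properties of the Furstenberg boundary map'', is essentially the content of the lemma and cannot be taken as an input. That uniform large-deviation estimate is classical for i.i.d.\ products of matrices (Furstenberg, Le Page), but here $A_V$ is a cocycle over the base $(\mathcal M,\nu_{\mathcal M})$: the matrices encountered along a block depend on the base point $y$, the increments are not i.i.d.\ in $\GL(V)$, and $\mathcal M$ is not compact. The actual mechanism in \cite{EC} is different: one works with the conditional measures $\eta_y$ of the stationary measure on the Grassmannian, uses Lemma C.10 together with continuity of $y\mapsto\eta_y$ on a compact set of $\nu$-measure $\geq 1-\delta$ to get a bound $\eta_y(\text{nbhd of }I(h))<\epsilon$ that is uniform only over that compact set, and then a martingale convergence argument to show the transported vector $v_i$ stays a definite distance from the slow Oseledets subspace at $y_i$ for a set of times $i$ of density $\geq 1-\epsilon$. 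Second, your resolution of the ``for every $x\in\mathcal M$'' quantifier is wrong as stated: after one step the distribution of the base point is $\mu*\delta_x$, which is nowhere near equidistributed; what is needed (and what \cite{EC} relies on) is the Eskin--Mirzakhani(--Mohammadi)/Benoist--Quint-type theorem that Ces\`aro averages of $\mu^{*n}*\delta_x$ converge to $\nu_{\mathcal M}$ for \emph{every} $x$ with orbit closure $\mathcal M$. Without that input your argument, which rests on Oseledets and Lemma C.10, only yields the statement for $\nu$-a.e.\ $x$. A minor additional point: deriving the conclusion of Theorem \ref{thm:random} directly from Oseledets plus C.10 at the outset is circular relative to the logic of the appendix (the block lemma is the tool used to prove that theorem), and your appeal to Borel--Cantelli does not specify any summable sequence of events.
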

\begin{proof}
	Refer to section 2.3 of \cite{EC}.\\
\end{proof}

Now let $\overline g$ be in the full measure set as above, $K$ be the subset of
density $4 \epsilon$ and $I$ the set of indices $i$ in the blocks $[i+1, i+L]$.
Then for $n \gg L$,
\begin{align*}
	\log || v_n || &= \sum_{i=1}^n \log \frac {||v_{i}||}{||v_{i-1}||}\\
		       &= \underbrace{\sum_{i \in I \cap [1, n-L]} \log \frac
	{||v_{i+L}||}{||v_{i}||}}_{S_1} + \underbrace{\sum_{i \in K \cap [1,
n-L]} \log \frac {||v_{i}||}{||v_{i-1}||}}_{S_2} +
\underbrace{\sum_{i=n-L'+1}^n \log \frac
{||v_{i}||}{||v_{i-1}||}}_{S_3}
\end{align*}
where $n-L' = \max \{ n-L, I + L \}$.\\

Let $C$ such that for all $g$ in the support of $\mu$ and all $y \in \mathcal
M$, $|| A_V(g,y) || \leq C$. Then $|S_3| \leq L \log C$, and $|S_2| \leq 4
\epsilon n \log C$.\\ 
Moreover
$$\frac{|I \cap [1, \dots, n]| \cdot L}{n} \geq 1 - 4\epsilon$$
Hence
$$S_1 \geq \left|I \cap [1, \dots, n]\right| \cdot (\lambda_1 -
\epsilon)L \geq (1-4\epsilon)n (\lambda_1 - \epsilon)$$
and
$$\frac 1 n \log ||v_n|| \geq (1-4\epsilon)(\lambda_1 - \epsilon) - 4\epsilon \log C
- \frac L n \log C$$ 
for almost every $\overline g$ and any $n \gg L$.\\

Since $\epsilon \geq 0$ is arbitrary, we get for
all $h\in V$ and almost every $\overline g$,
$$\liminf_{n\to\infty} \log || A_V(g_n \dots g_1, x) h|| \geq \lambda_1.$$
And with a similar argument we get an upper bound
$$\limsup_{n\to\infty} \log || A_V(g_n \dots g_1, x) h|| \leq \lambda_1.$$
Which implies Theorem \ref{thm:random}.

\subsection{Proof of Lemma \ref{genericity}}

According to the sublinear tracking Lemma of \cite{EC}, for almost every
$\theta \in [0, 2\pi)$, there exists $\overline g = (g_1, \dots, g_n, \dots)$
satisfying Theorem \ref{thm:random} such that we can write
$$g_{\lambda n} r_\theta =  \epsilon_n g_n \dots g_1$$
with $\epsilon_n \in \SL(2,\R)$ satisfying
$$ \lim_{n\to\infty} \frac 1 n \log ||\epsilon_n|| = 0$$
By the cocycle relation we have
$$A_V (g_{\lambda n}, r_\theta x) = A_V(\epsilon_n, g_n \dots g_1 x) A_V(g_n \dots g_1, x).$$
But there exists $C > 0$ and $N < \infty$ so that for all
$g \in \SL(2, \R)$ and all $x \in \mathcal H_1(\alpha)$,
$$|| A_V(g,x) || \leq C ||g||^N.$$

\noindent Hence
$$\log || A_V(g_{\lambda n}, r_\theta x) h || = \log || A_V(g_n \dots g_1, x)|| h + o(n).$$
Which shows the Lemma.\\

\newpage
\bibliographystyle{alpha} \bibliography{intro}

\newcommand{\etalchar}[1]{$^{#1}$}
\begin{thebibliography}{EMM15}

\bibitem[AEZ16]{AEZ}
Jayadev~S. Athreya, Alex Eskin, and Anton Zorich.
\newblock Right-angled billiards and volumes of moduli spaces of quadratic
  differentials on $\mathbb{C}p^1$.
\newblock {\em Ann. Sci. \'Ec. Norm. Sup\'er.}, 49(6), 2016.

\bibitem[BS81]{SinaiBu}
L.~A. Bunimovich and Ya.~G. Sina\u\i.
\newblock Markov partitions for dispersed billiards.
\newblock {\em Comm. Math. Phys.}, 78(2):247--280, 1980/81.

\bibitem[CE15]{EC}
Jon Chaika and Alex Eskin.
\newblock Every flat surface is {B}irkhoff and {O}seledets generic in almost
  every direction.
\newblock {\em J. Mod. Dyn.}, 9:1--23, 2015.

\bibitem[D{\etalchar{+}}16]{flatsurf}
Vincent Delecroix et~al.
\newblock surface\_dynamics, 2016.

\bibitem[DHL14]{DHL}
Vincent Delecroix, Pascal Hubert, and Samuel Leli{\`e}vre.
\newblock Diffusion for the periodic wind-tree model.
\newblock {\em Ann. Sci. \'Ec. Norm. Sup\'er. (4)}, 47(6):1085--1110, 2014.

\bibitem[DZ15]{DZ}
Vincent Delecroix and Anton Zorich.
\newblock Cries and whispers in wind-tree forests, 2015.

\bibitem[EE90]{Ehr}
Paul Ehrenfest and Tatiana Ehrenfest.
\newblock {\em The conceptual foundations of the statistical approach in
  mechanics}.
\newblock Dover Publications, Inc., New York, english edition, 1990.
\newblock Translated from the German by Michael J. Moravcsik, With a foreword
  by M. Kac and G. E. Uhlenbeck.

\bibitem[EFW18]{EFW}
Alex Eskin, Simion Filip, and Alex Wright.
\newblock The algebraic hull of the {K}ontsevich–{Z}orich cocycle.
\newblock {\em Ann. of Math. (2)}, 188:281--313, 2018.

\bibitem[EM13]{EM}
Alex Eskin and Maryam Mirzakhani.
\newblock Invariant and stationary measures for the sl(2,r) action on moduli
  space, 2013.

\bibitem[EMM15]{EMM}
Alex Eskin, Maryam Mirzakhani, and Amir Mohammadi.
\newblock Isolation, equidistribution, and orbit closures for the {${\rm
  SL}(2,\Bbb R)$} action on moduli space.
\newblock {\em Ann. of Math. (2)}, 182(2):673--721, 2015.

\bibitem[Fou16]{Fouge}
Charles Fougeron.
\newblock Lyapunov exponents of the hodge bundle over strata of quadratic
  differentials with large number of poles, 2016.

\bibitem[FU14]{Ulci}
Krzysztof Fraczek and Corinna Ulcigrai.
\newblock Non-ergodic {$\Bbb{Z}$}-periodic billiards and infinite translation
  surfaces.
\newblock {\em Invent. Math.}, 197(2):241--298, 2014.

\bibitem[Fur63a]{FU2}
Harry Furstenberg.
\newblock Noncommuting random products.
\newblock {\em Trans. Amer. Math. Soc.}, 108:377--428, 1963.

\bibitem[Fur63b]{FU1}
Harry Furstenberg.
\newblock A {P}oisson formula for semi-simple {L}ie groups.
\newblock {\em Ann. of Math. (2)}, 77:335--386, 1963.

\bibitem[GR17]{Rodolfo}
R.~Gutiérrez-Romo.
\newblock Simplicity of the lyapunov spectra of certain quadratic
  differentials, 2017.

\bibitem[HW80]{HardyWeber}
J.~Hardy and J.~Weber.
\newblock Diffusion in a periodic wind-tree model.
\newblock {\em J. Math. Phys.}, 21(7):1802--1808, 1980.

\bibitem[MST18]{Alba2}
Alba M\'alaga~Sabogal and Serge~Eugene Troubetzkoy.
\newblock Infinite {E}rgodic {I}ndex of the {E}hrenfest {W}ind-{T}ree {M}odel.
\newblock {\em Comm. Math. Phys.}, 358(3):995--1006, 2018.

\bibitem[NZ99]{NZ}
Amos Nevo and Robert~J. Zimmer.
\newblock Homogenous projective factors for actions of semi-simple {L}ie
  groups.
\newblock {\em Invent. Math.}, 138(2):229--252, 1999.

\bibitem[SV04]{SV}
Domokos Sz\'asz and Tam\'as Varj\'u.
\newblock Markov towers and stochastic properties of billiards.
\newblock In {\em Modern dynamical systems and applications}, pages 433--445.
  Cambridge Univ. Press, Cambridge, 2004.

\bibitem[Vee93]{Veech93}
William~A. Veech.
\newblock Flat surfaces.
\newblock {\em Amer. J. Math.}, 115(3):589--689, 1993.

\bibitem[Wri14]{Wright14}
Alex Wright.
\newblock The field of definition of affine invariant submanifolds of the
  moduli space of abelian differentials.
\newblock {\em Geom. Topol.}, 18(3):1323--1341, 2014.

\bibitem[Wri15]{Wright13}
Alex Wright.
\newblock Cylinder deformations in orbit closures of translation surfaces.
\newblock {\em Geom. Topol.}, 19(1):413--438, 2015.

\end{thebibliography}
\end{document}